\newtheorem{theorem}{Theorem}[section]
\newtheorem{lemma}[theorem]{Lemma}
\newtheorem{proposition}[theorem]{Proposition}
\newenvironment{proof}[1][Proof]{\noindent\textbf{#1.} }{\ \rule{0.5em}{0.5em}}
\newcommand{\eproof}{}
\newcommand{\spann}{\operatorname{span}_{\oplus}}
\newcommand{\R}{\mathbb{R}}
\newcommand{\Rn}{\R^n}
\newcommand{\Rp}{\R_+}
\newcommand{\Rpn}{\Rp^n}
\newcommand{\Rpnn}{\Rp^{n\times n}}
\newcommand{\Rpmm}{\Rp^{m\times m}}
\newcommand{\kroneker}{\delta}
\newcommand{\Lin}{\operatorname{L}}
\newcommand{\lin}{\operatorname{Lin}}
\newcommand{\MB}{\operatorname{MB}}
\newcommand{\ri}{\operatorname{ri}}
\newcommand{\intererer}{\operatorname{int}}
\newcommand{\ext}{\operatorname{ext}_{\oplus}}
\newcommand{\extconv}{\operatorname{ext}}
\newcommand{\tchk}{\cdot}
\newcommand{\diag}{\operatorname{diag}}
\begin{document}
\title{On visualization scaling, subeigenvectors and Kleene stars in max algebra
\thanks{This research was supported by EPSRC grant RRAH12809, RFBR grant 08-01-00601 and joint
RFBR/CNRS grant 05-01-02807}}
\author{Serge\u{\i} Sergeev\thanks{%
School of Mathematics, University of Birmingham, Edgbaston,
Birmingham B15
2TT, UK.}
\and Hans Schneider\thanks{%
Department of Mathematics, University of Wisconsin-Madison, Madison,
Wisconsin 53706, USA.}
\and Peter Butkovi\v{c}\thanks{%
School of Mathematics, University of Birmingham, Edgbaston,
Birmingham B15 2TT, UK.}} \maketitle

\begin{abstract}
\noindent The purpose of this paper is to investigate the interplay
arising between max algebra, convexity and scaling problems. The
latter, which have been studied in nonnegative matrix theory, are
strongly related to max algebra. One problem is that of strict
visualization scaling, defined as,  for a given nonnegative matrix
$A$, a diagonal matrix $X$ such that all elements of $X^{-1}AX$ are
less than or equal to the maximum cycle geometric mean of $A$, with
strict inequality for the entries which do not lie on critical
cycles. In this paper such scalings are described by means of the
max algebraic subeigenvectors and Kleene stars of nonnegative
matrices as well as by some concepts of convex geometry.

\medskip\noindent AMS classification: 15A48, 15A39, 15A33, 52B11, 52A20.

\medskip\noindent Keywords: Max algebra, matrix scaling, diagonal similarity,
subeigenvectors, tropical convexity, convex cones,
Kleene star.

\end{abstract}

\fontsize{14pt}{18pt} \selectfont

\section{Introduction}

The purpose of this paper is to investigate the interplay arising
between max algebra, convexity and matrix scaling. A nonnegative
matrix $A$ is called \textit{visualized} if  all its elements are
less than or equal to the maximum cycle geometric mean $\lambda(A)$
of $A$, and it is called \textit{strictly visualized} if, further,
there is strict inequality for the entries which do not lie on
critical cycles. Given a nonnegative matrix $A$, the chief aim of
this paper is to identify and characterize in several ways diagonal
matrices $X$ with a positive diagonal for which $X^{-1}AX$ is
strictly visualized, see Theorems 3.3, 3.7, 4.2 and 4.4.

In Section 2, we revisit and
appropriately summarize the theory of max algebraic eigenvectors and
subeigenvectors, and some properties of Kleene stars.

Sections 3 and 4 contain our principal results. In Section 4 our
chief tool is the Kleene star $A^*$ of $A$ (defined for a definite
matrix), and the max algebraic cone $V^*(A)$. The latter consists of
the subeigenvectors of $A$ for the eigenvalue $\lambda(A)$ or,
equivalently, of the eigenvectors of $A^*$. We call $V^*(A)$ the
\textit{subeigencone} of $A$.  It is also a convex cone. Diagonal
matrices $X$ corresponding to vectors $x$ in its relative interior
of the subeigencone are precisely the matrices $X$ that strictly
visualize $A$, see Theorem 3.7. Among those vectors $x$ are all
linear combinations of the columns of $A^*$ with positive
coefficients, see Theorem 3.3.

While in Section 3 our approach is convex geometric, the main idea
of Section 4 is to start with a strictly visualized matrix and to
describe all strict visualizers in matrix theoretic terms, see
Theorem 4.2. We also show that the dimension of the linear hull of
the subeigencone $V^*(A)$ equals the number of components of the
critical graph of the Kleene star $A^*$, see Theorem 4.4. At the end
of the section we show by example that the max algebraic dimension
of $V^*(A)$ may exceed its linear algebraic dimension.

The interplay between max algebra
(essentially equivalent to tropical algebra) and convexity, here
explored via visualization, is also important for tropical
convexity, see the papers \cite{DS-04,Jos-05, JK-08}, among many
others. We also note that visualization scalings can be important
for max algebra, due to the connections with the theory of $0-1$ matrices
that they provide. See \cite{ED-99,ED-01,Ser-09} for
recent developments and applications of this idea.

\section{Eigenvectors and subeigenvectors}


By \textit{max algebra} we understand the analogue of linear algebra
developed over the max-times semiring $\R_{\max,\times}$ which is
the set of nonnegative numbers $\R_+$ equipped with the operations
of ``addition'' $a\oplus b:=\max(a,b)$ and the ordinary
multiplication $a\otimes b:=a\times b$. The operations of the
semiring are extended to the nonnegative matrices and vectors in the
same way as in conventional linear algebra. That is if $A=(a_{ij})$,
$B=(b_{ij})$ and $C=(c_{ij})$ are matrices of compatible sizes with
entries from $\R_+$,
we write $C=A\oplus B$ if $c_{ij}=a_{ij}\oplus b_{ij}$ for all $i,j$ and $%
C=A\otimes B$ if $c_{ij}=\sum_{k}^{\oplus }a_{ik}
b_{kj}=\max_{k}(a_{ik}b_{kj})$ for all $i,j$. If $\alpha \in\Rp$
then $\alpha A=\left( \alpha a_{ij}\right) $. We assume everywhere
in this paper that $n\geq 1$ is an integer. $P_{n}$ will stand for
the set of permutations of the set $\{1,...,n\},$ and the sets like
$\{1,\ldots,m\}$ or $\{1,\ldots,n\}$ will be denoted by $[m]$ or
$[n]$, respectively. If $A$ is an $n\times n$ matrix then the
iterated product $A\otimes A\otimes
...\otimes A$ in which the symbol $A$ appears $k$ times will be denoted by $%
A^{k}$.

Max algebra is often presented in  settings which seem to be
different from $\R_{\max,\times}$, namely, over the max-plus
semiring $\R_{\max,+}=(\R\cup\{-\infty\}, \oplus=\max,\otimes=+)$
and the min-plus (or tropical) semiring
$\R_{\min,+}=(\R\cup\{+\infty\}, \oplus=\min,\otimes=+)$. The
semirings are isomorphic to each other and to $\R_{\max,\times}$. In
particular, $x\mapsto\exp(x)$ yields an isomorphism between
$\R_{\max,+}$ and $\R_{\max,\times}$.

Let $A=(a_{ij})\in\Rpnn$. The max algebraic
eigenproblem consists in finding $\lambda\in\Rp$ and
$x\in\Rpn\backslash\{0\}$ such that $A\otimes x=\lambda x$. If this
equation is satisfied, then $\lambda$ is called a {\em max algebraic
eigenvalue} of $A$ and $x$ is called a {\em max algebraic
eigenvector} of $A$ associated with the eigenvalue $\lambda$.


We will also be interested in the {\em max algebraic subeigenvectors}
associated with $\lambda$, that is, $x\in\Rpn$ such that $A\otimes
x\leq\lambda x$. Their first appearance in max algebra seems to be
\cite{Gau:92} Ch. IV and \cite{Gau-95}. For a more recent
reference, see generalization of the max-plus spectral theory \cite{AGW-05},
where they are called super-eigenvectors.

Next we explain two notions important for both the eigenproblem and the
subeigenproblem: that of the maximum cycle mean and that of the Kleene star.

Let $A=(a_{ij})\in\Rpnn$. The weighted digraph $D_A=(N(A),E(A))$,
with the set of nodes $N(A)=[n]$ and the set of edges $E(A)=N(A)\times N(A)$
with weights $w(i,j)=a_{ij}$, is called the {\em digraph
associated} with $A$.
Suppose that $\pi =(i_{1},...,i_{p})$ is a path in $D_A$, then the \textit{%
weight} of $\pi $ is defined to be $w(\pi
,A)=a_{i_{1}i_{2}}a_{i_{2}i_{3}}\ldots a_{i_{p-1}i_{p}}$ if $p>1$, and $1$ if $p=1$.
If $i_1=i_p$ then $\pi$ is called a cycle.
A path $\pi$ is called {\em positive} if $w(\pi,A)>0$.
A path which begins at $i$ and ends at $j$ will be called an {\em $i\to j$ path}.
The \textit{maximum cycle geometric mean} of $A$, further denoted by $\lambda(A)$,
is defined by the formula
\begin{equation*}
\lambda (A)=\max_{\sigma }\mu (\sigma ,A),  \label{mcm}
\end{equation*}%
where the maximization is taken over all cycles in the digraph and
\begin{equation*}
\mu (\sigma ,A)=w(\sigma ,A)^{1/k}  \label{cm}
\end{equation*}%
denotes the \textit{geometric mean} of the cycle $\sigma =(i_{1},...,i_{k},i_{1})$.

If the series $I\oplus A\oplus A^2\oplus\ldots$ converges to a finite matrix, then this matrix is called
the {\em Kleene star} of $A$ and denoted by $A^*=(a^*_{ij})$.
The next proposition
gives a necessary and sufficient condition for a matrix to be a Kleene star.

\begin{proposition}
\label{char-kleene}
\cite{BCOQ} Let $A=(a_{ij})\in\Rpnn$. The following are equivalent:
\begin{itemize}
\item[1.] $A$ is a Kleene star;
\item[2.] $A^*=A$;
\item[3.] $A^2=A$ and $a_{ii}=1$ for all $i=1,\ldots,n$.
\end{itemize}
\end{proposition}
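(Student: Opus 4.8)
The plan is to prove the cyclic chain of implications $1\Rightarrow 3\Rightarrow 2\Rightarrow 1$, since the last implication is immediate: if $A^*=A$ then $A$ is by definition the Kleene star of $A$ itself, hence a Kleene star. The whole argument rests on two elementary structural observations, and it deliberately avoids any appeal to the maximum cycle geometric mean $\lambda(A)$. The first observation is that a matrix equal to a convergent Kleene star dominates the identity and is idempotent under $\otimes$; the second is that idempotence together with $I\le A$ already forces the diagonal entries to equal $1$.

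For $1\Rightarrow 3$, I would start from $A=B^*$ for some $B$, so that the series $\bigoplus_{k\ge 0}B^k$ converges to $A$. The order-zero term $I$ gives $A\ge I$ at once, in particular $a_{ii}\ge 1$. To get idempotence I would work with the partial sums $S_N=\bigoplus_{k=0}^N B^k$, which increase monotonically to $A$. Exact (finite) distributivity yields $S_N\otimes S_N=\bigoplus_{0\le k,l\le N}B^{k+l}=S_{2N}$, since every exponent from $0$ to $2N$ is realized. Letting $N\to\infty$ on both sides, the left side converging to $A\otimes A$ by continuity of $\max$ and product in finitely many variables and the right side to $A$, gives $A^2=A$. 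Finally I would read off the diagonal: from $A^2=A$ we get $a_{ii}=(A^2)_{ii}=\max_k a_{ik}a_{ki}\ge a_{ii}^2$, so $a_{ii}\ge a_{ii}^2$; combined with $a_{ii}\ge 1$ this forces $a_{ii}=1$. Hence $A$ satisfies condition $3$.

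For $3\Rightarrow 2$, I would assume $A^2=A$ and $a_{ii}=1$ for all $i$. Induction on $A^2=A$ gives $A^k=A$ for every $k\ge 1$, so every partial sum $\bigoplus_{k=0}^N A^k$ equals $I\oplus A$ once $N\ge 1$; the series therefore converges. Since $a_{ii}=1$ and $a_{ij}\ge 0$ we have $I\le A$, whence $I\oplus A=A$, so the series converges to $A$, that is $A^*=A$. Together with the trivial $2\Rightarrow 1$ this closes the cycle.

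The only genuinely delicate point is the interchange of limit and product in the proof of idempotence; once one is comfortable passing to the entrywise limit in $S_N\otimes S_N=S_{2N}$, which is legitimate because each entry is a finite $\max$ of products of bounded convergent sequences, the rest is bookkeeping. It is worth emphasizing that convergence of the relevant series is either hypothesized (in $1$ and $2$) or established directly by the telescoping argument in $3\Rightarrow 2$, so the maximum cycle geometric mean $\lambda(A)$ never needs to enter the argument.
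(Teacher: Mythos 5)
Your proof is correct. Note that the paper itself gives no proof of this proposition --- it is quoted from \cite{BCOQ} --- so there is no in-paper argument to compare against; what you have written is a valid, self-contained derivation. The cycle $1\Rightarrow 3\Rightarrow 2\Rightarrow 1$ is sound: the identity $S_N\otimes S_N=S_{2N}$ for the partial sums is correct (every exponent $0\le j\le 2N$ is realized as $k+l$ with $0\le k,l\le N$, and distributivity of $\otimes$ over $\oplus$ holds entrywise for finite maxima of nonnegative numbers), the entrywise limit interchange is legitimate since each entry of $S_N\otimes S_N$ is a finite max of products of bounded monotone convergent sequences, and the diagonal argument $a_{ii}\ge a_{ii}^2$ together with $a_{ii}\ge 1$ does force $a_{ii}=1$. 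The one place where you work harder than necessary is precisely that limit interchange: by Theorem \ref{mcm-kleene} the convergent series stabilizes at the finite sum $I\oplus B\oplus\cdots\oplus B^{n-1}$, so $S_N=S_{2N}=B^*$ for all $N\ge n-1$ and the idempotence $A^2=A$ drops out with no analysis at all. Your version buys independence from the cycle-mean machinery (and indeed from finiteness of the stabilization index), at the cost of a small continuity argument; either route is acceptable.
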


The next theorem  explains some of the interplay between the maximum
cycle geometric mean $\lambda(A)$, the Kleene star $A^*$, and the
max algebraic eigenproblem.

\begin{theorem}
\label{mcm-kleene}
\cite{BCOQ,Bap-98,Car-71,CG:79,Vor-67} Let $A\in\Rpnn$. Then
\begin{itemize}
\item[1.] the series $I\oplus A\oplus A^2\oplus\ldots$
converges to a finite matrix $A^*$ if and only if $\lambda(A)\leq 1$, and then
$A^*=I\oplus A\oplus A^2\oplus\ldots\oplus A^{n-1}$ and $\lambda(A^*)=1$;
\item[2.] $\lambda(A)$ is the greatest max algebraic eigenvalue of $A$.
\end{itemize}
\end{theorem}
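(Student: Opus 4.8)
The plan is to reduce everything to the combinatorial description of powers of $A$ in the max-times semiring. A direct induction shows that $(A^k)_{ij}$ equals the maximum weight $w(\pi,A)$ over all $i\to j$ paths $\pi$ with exactly $k$ edges, with $A^0=I$; consequently the $(i,j)$ entry of the partial sum $\bigoplus_{k=0}^{m}A^k$ is the largest weight of an $i\to j$ path using at most $m$ edges. With this dictionary in hand, both parts become statements about weights of paths and cycles in $D_A$.

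For part 1, I would argue both implications. If $\lambda(A)>1$, pick a cycle $\sigma$ with $\mu(\sigma,A)>1$, hence $w(\sigma,A)>1$; traversing $\sigma$ $r$ times produces closed walks at a node of $\sigma$ of weight $w(\sigma,A)^r\to\infty$, so the corresponding diagonal entries of $\bigoplus_k A^k$ are unbounded and the series diverges. Conversely, if $\lambda(A)\leq1$ then every cycle has weight $\le 1$, so given any $i\to j$ path containing a repeated node one may excise the intervening cycle without decreasing the weight (one divides out a factor $\le 1$). Iterating, the supremum of $i\to j$ path weights is attained by a path with no repeated node, which uses at most $n-1$ edges; hence $\bigoplus_{k=0}^{n-1}A^k$ already equals the full series and is finite, giving $A^*=\bigoplus_{k=0}^{n-1}A^k$. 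Finally $a^*_{ii}$ is the maximal weight of a cycle through $i$ together with the empty path, which equals $1$ since all cycles have weight $\le1$; and $(A^*)^2=A^*$ because a concatenation of two maximal walks is again a walk (the factor $a^*_{jj}=1$ recovering any single walk). By Proposition~\ref{char-kleene}, $A^*$ is a Kleene star; in particular every cycle of $D_{A^*}$ lifts to a closed walk in $D_A$ of weight $\le 1$, so $\lambda(A^*)\le 1$, while the unit loops give $\lambda(A^*)\ge1$, whence $\lambda(A^*)=1$.

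For part 2, I would first show $\lambda(A)$ is attained as an eigenvalue (assuming $\lambda(A)>0$; the case $\lambda(A)=0$ is treated separately by producing an $x$ with $A\otimes x=0$). Set $B=\lambda(A)^{-1}A$, so $\lambda(B)=1$ and $B^*$ exists by part 1. Choosing a node $i$ lying on a critical cycle, I would verify that the column $B^*_{\cdot i}$ is a nonzero eigenvector of $B$ for the eigenvalue $1$: off the diagonal $(BB^*)_{ji}$ and $b^*_{ji}$ both record maximal $j\to i$ walk weights, while on the diagonal the criticality of $i$ forces $(BB^*)_{ii}=1=b^*_{ii}$, so $B\otimes B^*_{\cdot i}=B^*_{\cdot i}$ and hence $A\otimes B^*_{\cdot i}=\lambda(A)\,B^*_{\cdot i}$. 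For maximality, suppose $A\otimes x=\mu x$ with $x\neq0$; starting from any $i_0\in\supp(x)$ I repeatedly select $i_{l+1}$ attaining the maximum in $(A\otimes x)_{i_l}=\mu x_{i_l}$, staying in $\supp(x)$, until a node repeats and a cycle $\sigma$ of length $k$ is closed. Multiplying the defining equalities around $\sigma$ and cancelling the positive coordinates of $x$ yields $w(\sigma,A)=\mu^{k}$, i.e.\ $\mu=\mu(\sigma,A)\le\lambda(A)$.

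The routine part is the path--power dictionary and the cycle-cancellation bookkeeping. The step I expect to require the most care is the eigenvector construction at a critical node---specifically checking the diagonal identity $(BB^*)_{ii}=1$ and that no heavier closed walk through $i$ exists---since this is exactly where the criticality hypothesis (the cycle through $i$ having geometric mean equal to $\lambda(A)$) enters, and it is the crux distinguishing $\lambda(A)$ from smaller cycle means.
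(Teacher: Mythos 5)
This theorem is stated in the paper as known background, cited from \cite{BCOQ,Bap-98,Car-71,CG:79,Vor-67} with no proof given, so there is no in-paper argument to compare against. Your proof is correct and is essentially the standard argument from those references: the path--power dictionary $(A^k)_{ij}=\max\{w(\pi,A)\}$ over $i\to j$ paths with $k$ edges, cycle excision to truncate the series at $A^{n-1}$ when $\lambda(A)\leq 1$, the column $B^*_{\cdot i}$ at a critical node $i$ as an eigenvector (where criticality is indeed exactly what gives $(BB^*)_{ii}=1$), and the cycle-extraction argument in the support of an arbitrary eigenvector to bound any eigenvalue by $\lambda(A)$. All the steps you flag as delicate check out, including the degenerate cases $\mu=0$ and $\lambda(A)=0$ (the latter forcing a zero column, hence a unit-vector eigenvector for the eigenvalue $0$).
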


This theorem shows great similarity between max algebra and nonnegative linear algebra. However, it
also reveals a crucial difference: the series $I\oplus A\oplus A^2\oplus A^3\oplus\ldots$ converges
also if $\lambda(A)=1$.

$A\in\Rpnn$ is called {\em irreducible} if for any nodes $i$ and $j$ in
$D_A$ a positive $i\to j$ path exists.

\begin{proposition}
\label{kls-pos}
\cite{BCOQ, CG:79}
If $A$ is irreducible and $\lambda(A)\leq 1$, then $A^*$ has all entries positive.
\end{proposition}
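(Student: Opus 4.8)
The plan is to show that under the hypotheses, for every pair of nodes $i,j$ the entry $a^*_{ij}$ is strictly positive, by exhibiting a positive path from $i$ to $j$ whose weight contributes to the Kleene star series. First I would recall from Theorem \ref{mcm-kleene} that since $\lambda(A)\leq 1$ the series $I\oplus A\oplus A^2\oplus\ldots$ converges to the finite matrix $A^*=I\oplus A\oplus\ldots\oplus A^{n-1}$. This gives the combinatorial interpretation I want: for $i\neq j$, the entry $a^*_{ij}=\max_{\pi} w(\pi,A)$, where the maximum is taken over all $i\to j$ paths $\pi$ of length at most $n-1$ (equivalently, over all $i\to j$ paths, since any path can be shortened to one of length $\leq n-1$ without decreasing its weight). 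For the diagonal entries, $a^*_{ii}\geq 1$ because the length-$0$ path contributes the identity term.

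Next I would use irreducibility directly. Fix $i\neq j$; by definition of irreducibility there exists a positive $i\to j$ path $\pi$ in $D_A$, meaning $w(\pi,A)>0$. I would then observe that a positive path of minimal length between two distinct nodes is automatically simple (it visits no node twice), hence has length at most $n-1$. Therefore this positive path contributes a strictly positive term to the finite sum defining $a^*_{ij}$, which forces $a^*_{ij}\geq w(\pi,A)>0$. Since all entries of the matrices $A^k$ are nonnegative, the max-algebraic sum defining $a^*_{ij}$ is at least this positive term, giving $a^*_{ij}>0$. The diagonal entries satisfy $a^*_{ii}\geq 1>0$ by the identity term, so in fact every entry of $A^*$ is positive.

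The main point to handle carefully, and the only real obstacle, is the reduction from an arbitrary positive $i\to j$ path to one of length at most $n-1$: I must justify that shortening (removing cycles from) a positive path keeps it positive and keeps its endpoints fixed, so that its weight still appears among the terms $A^0,\ldots,A^{n-1}$ summed in $A^*$. Concretely, if a positive path repeats a node, excising the intermediate cycle yields a shorter $i\to j$ path whose weight is the original weight divided by the (positive) weight of the excised cycle; since all edge weights along the original path are positive, both the excised cycle and the shortened path have positive weight. Iterating until no node repeats produces a simple positive $i\to j$ path of length $\leq n-1$, as required. This is the step where the finiteness bound $n-1$ from Theorem \ref{mcm-kleene} and irreducibility combine to yield positivity of every $a^*_{ij}$.
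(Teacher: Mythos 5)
Your argument is correct and complete. The paper itself gives no proof of this proposition (it is cited from the literature), so there is nothing internal to compare against; but your route is the standard one and fits exactly the combinatorial interpretation the paper states right after the proposition, namely that for $i\neq j$ one has $a^*_{ij}=\max w(\pi,A)$ over $i\to j$ paths. The one step you flag as delicate --- excising cycles from a positive path to obtain a simple positive $i\to j$ path of length at most $n-1$ --- is handled correctly: since a positive path has every edge weight positive, every subpath obtained by deleting an intermediate cycle is again positive, and a simple path between distinct nodes has at most $n-1$ edges, so its weight appears in the finite sum $I\oplus A\oplus\cdots\oplus A^{n-1}$ guaranteed by Theorem \ref{mcm-kleene}. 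Together with $a^*_{ii}\geq 1$ from the identity term, this gives positivity of every entry.
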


More generally, it is important that Kleene stars accumulate the paths with greatest weights.
Namely, if $i\neq j$ then $a^*_{ij}=\max w(\pi,A)$ where $\pi$ ranges over paths from $i$ to $j$.

Matrices with $\lambda(A)=1$ are called {\em definite}.


Results involving  a Kleene star $A^*$ will be stated for definite
matrices.  There is no real loss of generality here in the case of
matrices $A$ with $\lambda(A) >0$. Indeed, for any such $A$ we have
that $\lambda(\alpha A)=\alpha\lambda(A)$, and if $\alpha>0$, then
any eigenvector of $A$ associated with $\lambda(A)$ is also an
eigenvector of $\alpha A$ associated with $\lambda(\alpha A)$ and
conversely. Hence if $\lambda(A)>0$, then the eigenproblems for $A$
and $A/\lambda(A)$, which is definite, are equivalent.

Note that $\lambda(A)=0$ implies that $A$ contains a zero column,
and then eigenvectors and subeigenvectors are just vectors $x$
satisfying $x_i=0$ whenever the corresponding column $A_{\cdot
i}\neq 0$. In what follows,
we will not treat this trivial case and we will always assume that $\lambda(A)>0$.

The spaces that we consider in max algebra are subsets of $\Rpn$
closed under componentwise maximization $\oplus$,
and scalar multiplication. They are called {\em max cones}, due to
the apparent analogy and important connections with conventionally
convex cones in $\Rpn$.

The set of subeigenvectors of $A$ associated with $\lambda(A)$ will
be denoted by $V^*(A)$. The set of eigenvectors associated with
$\lambda(A)$ will be denoted by $V(A)$. Both sets are max cones, and
hence $V(A)$ will be called the {\em eigencone} of $A$, and $V^*(A)$
will be called the {\em subeigencone} of $A$. Next we study some
simple relations between $V(A)$ and $V^*(A)$.  The first one
is immediate.

\begin{proposition}
\label{eiginsubeig}
$V(A)\subseteq V^*(A)$.
\end{proposition}

Further we denote by $\spann(A)$ the {\em max algebraic column span}
of $A$, which is the set of {\em max combinations}
$\{\sum^{\oplus}_i \lambda_i A_{\cdot i},\ \lambda_i\in\Rp\}$ of the
columns of $A$. Note that $V(A)\subseteq\spann(A)$ for any matrix
$A$.

\begin{proposition}
\label{subeigstar}
If $A$ is definite, then $V^*(A)=V(A^*)=V^*(A^*)=\spann(A^*)$.
\end{proposition}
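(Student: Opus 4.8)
The plan is to establish the four sets as equal through a short chain of equalities, each obtained by rewriting the defining (sub)eigenvector condition and invoking only the Kleene star identities from Proposition \ref{char-kleene}, namely $A^*\otimes A^*=A^*$ and $a^*_{ii}=1$ for all $i$, together with the consequences of definiteness drawn from Theorem \ref{mcm-kleene}. Since $A$ is definite, $\lambda(A)=1\le 1$, so $A^*=I\oplus A\oplus\cdots\oplus A^{n-1}$ is finite and $\lambda(A^*)=1$; in particular the relevant eigenvalue throughout is $1$, so that $V^*(A)=\{x\in\Rpn:A\otimes x\le x\}$, $V(A^*)=\{x:A^*\otimes x=x\}$, and $V^*(A^*)=\{x:A^*\otimes x\le x\}$. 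The zero vector lies trivially in every one of these cones, so I only need to track the nonzero elements.

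First I would prove $V^*(A)=V(A^*)$. For the inclusion $V(A^*)\subseteq V^*(A)$, note that $A\le A^*$ entails $A\otimes x\le A^*\otimes x=x$ whenever $A^*\otimes x=x$. For the reverse inclusion, suppose $A\otimes x\le x$. By monotonicity of $\otimes$ and induction, $A^k\otimes x\le x$ for every $k\ge 0$, the case $k=0$ giving $x\le x$. Hence $A^*\otimes x=\bigoplus_{k=0}^{n-1}(A^k\otimes x)\le x$, and since the $k=0$ summand equals $x$ itself we in fact get $A^*\otimes x=x$, so $x\in V(A^*)$.

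Next I would handle $V(A^*)=V^*(A^*)$ and $V(A^*)=\spann(A^*)$. The inclusion $V(A^*)\subseteq V^*(A^*)$ is Proposition \ref{eiginsubeig} applied to $A^*$; for the reverse, if $A^*\otimes x\le x$ then, using $a^*_{ii}=1$, each coordinate satisfies $(A^*\otimes x)_i\ge a^*_{ii}x_i=x_i$, so $A^*\otimes x\ge x$ and therefore $A^*\otimes x=x$. Finally, $V(A^*)\subseteq\spann(A^*)$ is the general remark $V(B)\subseteq\spann(B)$ with $B=A^*$, while for $\spann(A^*)\subseteq V(A^*)$ any $x=\bigoplus_i\lambda_i A^*_{\cdot i}$ gives $A^*\otimes x=\bigoplus_i\lambda_i(A^*\otimes A^*_{\cdot i})=\bigoplus_i\lambda_i A^*_{\cdot i}=x$ by the idempotency $A^*\otimes A^*=A^*$. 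Combining the three equalities yields $V^*(A)=V(A^*)=V^*(A^*)=\spann(A^*)$.

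The only step requiring genuine argument is the forward direction of the first equality, where I must run the monotone induction $A^k\otimes x\le x$ and then truncate the star at $A^{n-1}$; this is exactly the place where definiteness is used, both to guarantee that $A^*$ is finite and that $1$ is the correct eigenvalue. Every remaining inclusion is a one-line consequence of the Kleene star identities, so I do not anticipate any serious obstacle beyond careful bookkeeping of the inequalities and the (trivial) role of the zero vector.
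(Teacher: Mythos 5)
Your proof is correct and follows essentially the same route as the paper: the central step in both is the monotone induction $A^k\otimes x\le x$ together with truncation of the star at $A^{n-1}$, and the remaining equalities are one-line consequences of the Kleene star identities. The only cosmetic differences are that the paper derives $V^*(A^*)=V(A^*)$ from $(A^*)^*=A^*$ rather than from $a^*_{ii}=1$ directly, and obtains $\spann(A^*)\subseteq V^*(A)$ from $A\otimes A^*\leq A^*$ rather than from the idempotency $A^*\otimes A^*=A^*$; both variants are valid.
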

\begin{proof}
First note that by Theorem \ref{mcm-kleene}, if $\lambda(A)=1$ then $A^*$ exists and $\lambda(A^*)=1$. Now
we show that $V^*(A)=V(A^*)$. Suppose that $A^*\otimes x=x$,
then $A\otimes x\leq x$, because $A\leq A^*$. If $A\otimes x\leq x$, then $(I\oplus A)\otimes x=x$
and also $A^*\otimes x=x$, since $A^m\otimes x\leq x$ for any $m$ (due to the monotonicity of matrix
multiplication). As $(A^*)^*=A^*$ by Prop. \ref{char-kleene}, we also have that
$V^*(A^*)=V(A^*)$.

We show that $V^*(A)=\spann(A^*)$. As $A\otimes A^*\leq A^*$, each
column of $A^*$ is a subeigenvector of $A$, hence
$\spann(A^*)\subseteq V^*(A)$. The converse inclusion follows from
$V^*(A)=V(A^*)$ and the inclusion $V(A^*)\subseteq\spann(A^*)$.
\end{proof}

A matrix $A$ will be called {\em strongly definite}, if it is definite
and if all its diagonal entries equal $1$.
Note that any Kleene star is strongly definite by Prop. \ref{char-kleene}.

\begin{proposition}
\label{eig=subeig}
For $A$ a strongly definite matrix, $V(A)= V^*(A)$.
\end{proposition}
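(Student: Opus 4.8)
The plan is to combine the already-established inclusion $V(A)\subseteq V^*(A)$ from Proposition \ref{eiginsubeig} with its reverse. Since $A$ is strongly definite, it is in particular definite, so $\lambda(A)=1$; hence $V(A)=\{x\in\Rpn: A\otimes x = x\}$ and $V^*(A)=\{x\in\Rpn: A\otimes x\leq x\}$. Thus it only remains to prove $V^*(A)\subseteq V(A)$, i.e., that every subeigenvector is in fact an eigenvector.

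The key step is to exploit the hypothesis that all diagonal entries equal $1$. For any $x\in\Rpn$ and any index $i$, the $i$-th coordinate of $A\otimes x$ is $\max_k a_{ik}x_k$, which is bounded below by the single term $a_{ii}x_i = x_i$. Hence $A\otimes x\geq x$ holds for \emph{every} $x\in\Rpn$, with no subeigenvector hypothesis needed. This is the whole mechanism: a unit diagonal forces $A\otimes x\geq x$ coordinatewise.

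To finish, I would take $x\in V^*(A)$, so $A\otimes x\leq x$, and combine this with the inequality $A\otimes x\geq x$ just established to conclude $A\otimes x = x$, whence $x\in V(A)$. This proves $V^*(A)\subseteq V(A)$, and together with Proposition \ref{eiginsubeig} yields the claimed equality. There is no genuine obstacle here; the only care required is to keep the direction of the max-algebraic inequalities straight and to recall that $\lambda(A)=1$, so that the defining relations of $V(A)$ and $V^*(A)$ reduce to $A\otimes x = x$ and $A\otimes x\leq x$ respectively.
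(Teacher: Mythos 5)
Your proof is correct and is essentially the paper's own argument: the unit diagonal makes $(A\otimes x)_i=\max_k a_{ik}x_k\geq a_{ii}x_i=x_i$, so the subeigenvector inequality $A\otimes x\leq x$ forces equality. The paper phrases this as ``$\sum^{\oplus}_{j\neq i} a_{ij}y_j\oplus y_i\leq y_i$ is equivalent to $\sum^{\oplus}_{j\neq i} a_{ij}y_j\oplus y_i = y_i$,'' which is the same mechanism you identify.
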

\begin{proof}
To establish $V(A)=V^*(A)$, it is enough to show $V^*(A)\subseteq V(A)$, as the converse
inclusion is trivially true. Take $y\in V^*(A)$. We have that
$\sum^{\oplus}_{j\neq i} a_{ij}y_j\oplus y_i\leq y_i$ which is equivalent to
$\sum^{\oplus}_{j\neq i} a_{ij} y_j\oplus y_i=y_i$, so $y\in V(A)$.
\end{proof}\eproof

By the above propositions, the subeigenvectors of $A$, and in the
strongly definite case also the eigenvectors of $A$, are described
as the vectors from the max algebraic column span of $A^*$, which we
call {\em Kleene cone}.

More generally, a set $S$ is called a {\em generating set} for a max
cone $K$, written $K=\spann(S)$, if every vector $y\in K$ can be
expressed as a max combination $y={\sum_{i=1}^m}^{\oplus} \lambda_i
x^i$  of some elements $x^1,\ldots, x^m\in S$, with $\lambda_i\geq 0$ for $i\in[m]$.
A set $S$ is called a
{\em (weak) basis} for $K$ if $\spann(S)=K$ and none of the vectors
in $S$ can be expressed as a max combination of the other vectors in
$S$. A vector $y\in K$ is called a {\em max extremal} of $K$, if
$y=u\oplus w,\ u,w\in K$ implies that $y=u$ or $y=w$. The set of
max extremals $u$ of $K$  scaled with respect to the max norm, which
means that $||u||=\max_i u_i=1$, will be denoted by $\ext(K)$.
We have the following general result describing max extremals of
closed max cones.

\begin{theorem}
\label{mink} \cite{BSS-07,GK-07} If $K\subseteq\Rpn$ is a closed max cone, then the set
$\ext(K)$ is non-empty and it is the unique scaled basis for $K$.
\end{theorem}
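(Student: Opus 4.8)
The plan is to prove the three assertions — non-emptiness, generation, and uniqueness — by separating the \emph{necessity} of extremals (a purely algebraic fact) from their \emph{sufficiency} as generators (where closedness enters). Throughout I assume $K\neq\{0\}$; the case $K=\{0\}$ is vacuous, since then no scaled vector exists and the empty set is the unique scaled basis.

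First I would record the key algebraic lemma: if $u$ is a scaled max extremal and $u=\bigoplus_{i=1}^m \lambda_i x^i$ with each $x^i\in K$, then $u=\lambda_j x^j$ for a single index $j$. This follows by grouping $u=(\lambda_1 x^1)\oplus\bigl(\bigoplus_{i\geq 2}\lambda_i x^i\bigr)$, applying the defining property of a max extremal (both summands lie in $K$), and inducting on $m$. Comparing max norms then forces $\lambda_j=1$ and $u=x^j$. Two consequences are immediate. (i) If $S$ is any scaled generating set, every $u\in\ext(K)$ equals some $x^j\in S$, so $\ext(K)\subseteq S$; in particular $\ext(K)$ is contained in every scaled basis. (ii) Applying the lemma with the generating set $\ext(K)\setminus\{u\}$ shows that no $u\in\ext(K)$ is a max combination of the remaining extremals, so $\ext(K)$ carries no redundancy.

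The substantive step is to show $\spann(\ext(K))=K$, and this is where closedness is essential. Fix $y\in K\setminus\{0\}$ and, for each $i\in\supp(y)$, consider $K_i(y)=\{z\in K:\ z\leq y,\ z_i=y_i\}$. This set is nonempty (it contains $y$) and, being the intersection of the closed cone $K$ with the compact box $\{z:\ 0\leq z\leq y\}$ and the hyperplane $\{z_i=y_i\}$, it is compact; moreover the componentwise infimum of a $\leq$-chain in $K_i(y)$ is the limit of a decreasing net, hence again lies in $K_i(y)$ by closedness, so Zorn's lemma yields a $\leq$-minimal element $z^i\in K_i(y)$. I would then verify that such a minimal $z^i$ is a max extremal: if $z^i=a\oplus b$ with $a,b\in K$, then $a,b\leq z^i\leq y$ and $\max(a_i,b_i)=y_i$, so one of them, say $a$, lies in $K_i(y)$ and satisfies $a\leq z^i$, whence $a=z^i$ by minimality. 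Finally $\bigoplus_{i\in\supp(y)} z^i=y$, since each $z^i\leq y$ gives ``$\leq$'', while coordinate $i$ is already saturated by $z^i_i=y_i$; scaling each $z^i$ to max norm $1$ exhibits $y$ as a max combination of elements of $\ext(K)$. In particular $\ext(K)\neq\emptyset$.

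The three claims now combine cleanly. By the generation step $\ext(K)$ is a scaled generating set, and by consequence (ii) it has no redundancy, so it is a non-empty scaled basis. For uniqueness, let $S$ be any scaled basis. By consequence (i) we have $\ext(K)\subseteq S$; conversely any $s\in S$ lies in $K=\spann(\ext(K))$, so if $s\notin\ext(K)$ then $s$ is a max combination of elements of $\ext(K)\subseteq S\setminus\{s\}$, contradicting that $S$ is a basis. Hence $S=\ext(K)$. The main obstacle is the generation step, specifically extracting a $\leq$-minimal element of $K_i(y)$ and showing that minimality forces extremality; everything else is formal manipulation of the max-extremal definition and of max combinations.
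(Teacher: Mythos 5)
Your argument is correct. Note that the paper itself gives no proof of this theorem: it is quoted from \cite{BSS-07,GK-07}, so there is no in-paper argument to compare against. What you have written is essentially the standard proof from those sources (in particular the tropical Minkowski theorem of Gaubert and Katz): the necessity of extremals in any scaled generating set and the irredundancy of $\ext(K)$ are the formal consequences of the one-summand lemma, and the substantive step is exactly the one you isolate, namely extracting a $\leq$-minimal element of $K_i(y)=\{z\in K:\ z\leq y,\ z_i=y_i\}$ for each $i\in\supp(y)$ and showing minimality forces extremality. Two cosmetic remarks: the norm comparison $\lambda_j=1$ in your lemma uses that the $x^i$ are scaled, which holds in both applications but is not part of the lemma's hypotheses as you state it; and Zorn's lemma can be avoided by minimizing the continuous function $z\mapsto\sum_k z_k$ over the compact set $K_i(y)$, which yields a $\leq$-minimal element directly. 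Neither affects correctness.
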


If $K=\spann(A)$  for some matrix $A$, then $K$ is closed, so the
set $\ext(\spann(A))$ denoted by $\ext(A)$ for brevity, is non-empty
and constitutes the unique scaled basis for $\spann(A)$. In this
case the vectors of $\ext(A)$ are some of the columns of $A$ scaled
with respect to the max norm.

Next we describe the eigencone and the subeigencone of $A\in\Rpnn$, and the
sets of their scaled max extremals, in the case $\lambda(A)>0$. For this
we will need the following notions and notation.
The cycles with the cycle geometric mean equal to $\lambda(A)$ are
called {\em critical}, and the nodes and the edges of $D_A$ that
belong to critical cycles are called {\em critical}. The set of
critical nodes is denoted by $N_c(A)$, the set of critical
edges is denoted by $E_c(A)$, and the {\em critical digraph} of
$A$, further denoted by $C(A)=(N_c(A),E_c(A))$, is the digraph which
consists of all critical nodes and critical edges of $D_A$. All
cycles of $C(A)$ are critical \cite{BCOQ}. The set of nodes that are
not critical is denoted by $\overline{N_c(A)}$. By $C^*(A)$ we
denote the digraph with the set of nodes $[n]$ and the set of edges
$E_c^*(A)$ containing all the loops $(i,i)$ for $i\in [n]$ and such
that $(i,j)\in E_c^*(A)$, for $i\neq j$, if and only if there exists
an $i\to j$ path $(i_1,\ldots,i_p)$ in $C(A)$. The following theorem describes
both subeigencone and eigencone in the case when $A$ is
definite. For two vectors $x$ and $y$, we write $x\sim y$ if
$x=\lambda y$ for $\lambda>0$.

\begin{theorem}
\label{subeigs} Let $A\in\Rpnn$ be a definite matrix, and let $M(A)$ denote
a fixed set of indices such that for each strongly connected
component of $C(A)$ there is a unique index of
that component in $M(A)$.
Then $A^*$ is strongly definite, and
\begin{itemize}
\item[1.] the following are equivalent: $(i,j)\in E_c(A)$,
$a_{ij}a^*_{jk}=a^*_{ik}$ for all $k\in[n]$, $a^*_{kj}=a^*_{ki}a_{ij}$ for all $k\in[n]$.
\item[2.] the following are equivalent: $(i,j)\in E_c^*(A)$, $A^*_{\cdot i}\sim A^*_{\cdot j}$,
$A^*_{i\cdot}\sim A^*_{j\cdot}$;
\item[3.] any column of $A^*$ is a max extremal of $\spann(A^*)$;
\item[4.] $V(A)$ is described by
\begin{equation*}
V(A) =\left\{ {\sum_{i\in M(A)}}^\oplus \lambda_i A^*_{\tchk i};\ \lambda_i\in\Rp\right\},
\end{equation*}
and $\ext(V(A))$ is the set of scaled columns of $A^*$ whose indices
belong to $M(A)$;
\item[5.] for any $y\in V^*(A)$ and any $(i,j)\in E_c(A)$ we have $a_{ij} y_j=y_i$;
\item[6.] $V^*(A)$ is described by
\begin{equation*}
V^*(A)=V(A^*) = \left\{ {\sum_{i\in M(A)}}^\oplus \lambda_i A^*_{\tchk i}\oplus\sum_{j\in\overline{N_c(A)}}^{\oplus}
\lambda_j A^*_{\tchk j};\ \lambda_i,\lambda_j\in\Rp\right\},
\end{equation*}
and $\ext(V^*(A))=\ext(A^*)$ is the set of scaled columns of $A^*$
whose indices belong to $M(A)\cup\overline{N_c(A)}$.
\end{itemize}
\end{theorem}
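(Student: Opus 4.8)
The plan is to prove the six items roughly in the stated order, since the later ones lean on the earlier ones, with the two combinatorial facts (items 1 and 3) doing most of the work. The preliminary claim that $A^*$ is strongly definite is immediate: by Theorem~\ref{mcm-kleene} the star exists and $\lambda(A^*)=1$, and by Prop.~\ref{char-kleene} (applied to the Kleene star $A^*$) its diagonal entries are all $1$. Throughout I would exploit the path interpretation $a^*_{ik}=\max_\pi w(\pi,A)$ over $i\to k$ paths together with the triangle inequalities $a^*_{il}a^*_{lk}\le a^*_{ik}$ and $a_{il}a^*_{lk}\le a^*_{ik}$, which hold because concatenation of paths produces paths.

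For item 1 the inequality $a_{ij}a^*_{jk}\le a^*_{ik}$ always holds, so only the reverse needs proof. I would first treat $k=i$: if $(i,j)\in E_c(A)$ then the critical cycle through $(i,j)$ supplies a $j\to i$ path of weight $1/a_{ij}$, whence $a_{ij}a^*_{ji}\ge 1$; combined with the cycle bound $a_{ij}a^*_{ji}\le\lambda(A)=1$ this gives $a_{ij}a^*_{ji}=1$. The general $k$ then follows from $a^*_{ik}=a^*_{ik}\,(a_{ij}a^*_{ji})\le a_{ij}a^*_{jk}$, using $a^*_{ji}a^*_{ik}\le a^*_{jk}$. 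The converse is the easy direction: taking $k=i$ exhibits a closed walk through $(i,j)$ of weight $1$, which (as $\lambda(A)=1$) must decompose into critical cycles, forcing $(i,j)$ to be critical. The second identity of item 1 is obtained symmetrically, by reading paths into, rather than out of, the edge.

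Items 2 and 3 are the technical core. For item 3 I would use that every $u\in\spann(A^*)$ is a fixed point $A^*\otimes u=u$ (because $A^*\otimes A^*=A^*$); then, writing $A^*_{\cdot j}=u\oplus w$ with $u_j=1$, the inequality $u_m=\max_k a^*_{mk}u_k\ge a^*_{mj}u_j=a^*_{mj}$ forces $u\ge A^*_{\cdot j}$ and hence $u=A^*_{\cdot j}$, so the column is extremal. For item 2, item 1 immediately gives that a single critical edge $(i,j)$ makes row $i$ a multiple of row $j$ and column $j$ a multiple of column $i$; chaining along a critical $i\to j$ path yields proportionality of both rows and columns, proving $E_c^*(A)\Rightarrow\ \sim$. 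The hard part will be the converse: from $A^*_{\cdot i}\sim A^*_{\cdot j}$, comparing the $i$- and $j$-th coordinates (and using $a^*_{ii}=a^*_{jj}=1$) gives $a^*_{ij}a^*_{ji}=1$; I would then decompose the closed walk formed by an optimal $i\to j$ path and an optimal $j\to i$ path into simple cycles, observe that their weights multiply to $1$ while each is $\le\lambda(A)=1$, conclude every such cycle is critical, and hence that all edges of the walk lie in $C(A)$, giving an $i\to j$ path in $C(A)$, i.e. $(i,j)\in E_c^*(A)$. Here I also need the structural fact that every critical edge lies on a critical cycle, so $C(A)$ splits into its strongly connected components with no edges between them; this makes such paths go both ways and $\sim$ a genuine equivalence on critical indices.

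Finally I would assemble items 4--6. Item 5 is a one-line consequence of item 1: writing $y=A^*\otimes\mu$ and substituting $a^*_{ik}=a_{ij}a^*_{jk}$ gives $y_i=a_{ij}y_j$ for every critical edge $(i,j)$. For item 4 the key computation is that $A^*_{\cdot i}$ is an eigenvector of $A$ (not merely a subeigenvector) exactly when $i$ is critical: for $m\ne i$ the first edge of an optimal $m\to i$ path already yields $a^*_{mi}\le\max_k a_{mk}a^*_{ki}$, while the diagonal coordinate gives $(A\otimes A^*_{\cdot i})_i=\max_k a_{ik}a^*_{ki}=1=a^*_{ii}$ precisely when $i$ lies on a cycle of weight $1$. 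I would then show every $x\in V(A)\subseteq\spann(A^*)$ equals $\bigoplus_{k\in N_c(A)}x_k A^*_{\cdot k}$, the delicate point being that each coordinate $x_m$ is attained through access to a critical node (so the non-critical columns are redundant); collapsing proportional critical columns via item 2 replaces $N_c(A)$ by the transversal $M(A)$. Item 6 is then the same bookkeeping starting from $V^*(A)=\spann(A^*)$ of Prop.~\ref{subeigstar}: all columns are kept, but the critical ones are grouped by item 2 into the representatives $M(A)$ while the non-critical columns, being pairwise non-proportional, all survive, giving the index set $M(A)\cup\overline{N_c(A)}$. In both items the identification of the generating family with $\ext(\cdot)$ comes from item 3 (each retained column is extremal, and the retained columns are pairwise non-proportional) together with the uniqueness of the scaled basis in Theorem~\ref{mink}.
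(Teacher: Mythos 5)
Your proposal is correct in substance, but it follows a much more self-contained route than the paper: the paper's own proof simply declares items 1--4 to be well-known (citing \cite{BCOQ, CG:79, CG:95, Gau:92, HOW:05}) and only proves items 5 and 6, deducing 5 from item 1 plus the representation $y\in\spann(A^*)$ of Prop.~\ref{subeigstar}, and 6 from Prop.~\ref{subeigstar} together with items 2 and 3 exactly as you do in your final paragraph. So for items 5 and 6 your argument coincides with the paper's; for items 1--4 you supply proofs the paper omits, and your path-theoretic arguments are the standard correct ones: the triangle inequalities for $A^*$ plus the cycle bound $a_{ij}a^*_{ji}\le 1$ for item 1, the decomposition of a weight-$1$ closed walk into weight-$1$ (hence critical) simple cycles for the converse directions of items 1 and 2, and the fixed-point identity $A^*\otimes u=u$ for $u\in\spann(A^*)$ for item 3. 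What your approach buys is a genuinely complete proof; what the paper buys is brevity. The one place where your plan is still only a gesture is the ``delicate point'' in item 4, namely that for $x\in V(A)$ the non-critical columns in $x=\bigoplus_k x_k A^*_{\cdot k}$ are redundant. This needs the saturation-graph argument: from each $m$ with $x_m>0$ follow edges $(p,q)$ with $a_{pq}x_q=x_p$ until a cycle is reached; its weight telescopes to $1$, so it is critical, and the path to it shows $x_m\le a^*_{mk}x_k$ for some $k\in N_c(A)$. You name the right idea but do not carry it out, so that step would need to be written in full before the proof of item 4 is complete.
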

\begin{proof}
Statements 1.-4. are well-known \cite{BCOQ, CG:79, CG:95, Gau:92, HOW:05}.

We show 5.: By Prop. \ref{subeigstar}, any $y\in V^*(A)$ is a max
combination of the columns of $A^*$. Let $(i,j)\in E_c(A)$, then part
1. implies that $a_{ij}z_j=z_i$ for any $z=A^*_{\cdot k}$,
$k\in[n]$. As $y$ is a max combination of all these, it follows that
$a_{ij}y_j=y_i$.

We show 6.: By Prop. \ref{subeigstar} we have $V^*(A)=\spann(A^*)$
and any column of $A^*$ is a max extremal of $\spann(A^*)$ by part 3.
By 2. we have that $A^*_{\cdot i}\sim A^*_{\cdot j}$ if and only if
$(i,j)\in E_c^*(A)$, hence all the columns in $M(A)$ are independent max
extremals and any other columns with indices in $N_c(A)$ are
proportional to them. Also note that there are no edges $(i,j)\in
E_c^*(A)$ such that $i\notin N_c(A)$ or $j\notin N_c(A)$ except for
the loops, and therefore all columns in $\overline{N_c(A)}$ are also
independent max extremals.
\end{proof}

The number of connected components of $C(A)$ will be denoted by
$n(C(A))$. For a finitely generated max cone $K$ the cardinality of
its unique scaled basis will be called the {\em max algebraic
dimension} of $K$.
Parts 4. and 6.
of Theorem \ref{subeigs} yield the following corollary.

\begin{proposition}
\label{maxdim} For any matrix $A\in\Rpnn$ with $\lambda(A)>0$ we
have that the max algebraic dimension of $V(A)$ is equal to
$n(C(A))$, and the max algebraic dimension of $V^*(A)$ is equal to
$n(C(A))+|\overline{N_c(A)}|$.
\end{proposition}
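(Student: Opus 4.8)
The plan is to reduce to the definite case and then simply count the scaled bases furnished by Theorem \ref{subeigs}. Since $\lambda(A)>0$, the matrix $A/\lambda(A)$ is definite, and, as observed in Section 2, the eigenproblem and the subeigenproblem for $A$ and for $A/\lambda(A)$ coincide; hence $V(A)=V(A/\lambda(A))$ and $V^*(A)=V^*(A/\lambda(A))$ as sets. Scaling by the positive constant $1/\lambda(A)$ does not change which cycles are critical, so $C(A)$, $N_c(A)$ and $\overline{N_c(A)}$ are unaffected. I may therefore assume $A$ definite, so that $A^*$ exists. By Proposition \ref{subeigstar} and parts 4 and 6 of Theorem \ref{subeigs}, both $V(A)$ and $V^*(A)$ are finitely generated max cones (max algebraic column spans of finitely many columns of $A^*$), so their max algebraic dimensions are defined, and by Theorem \ref{mink} they equal $|\ext(V(A))|$ and $|\ext(V^*(A))|$ respectively.

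Next I would read off these cardinalities. For $V(A)$, part 4 of Theorem \ref{subeigs} states that $\ext(V(A))$ is the set of scaled columns $A^*_{\cdot i}$ with $i\in M(A)$, and since $M(A)$ contains exactly one index per strongly connected component of $C(A)$, the number of such columns is the number of strongly connected components of $C(A)$; here I would invoke part 2 to see that $A^*_{\cdot i}\sim A^*_{\cdot j}$ only when $i$ and $j$ lie in the same component of $C(A)$, so the columns indexed by $M(A)$ are pairwise non-proportional and really do yield $|M(A)|$ distinct scaled extremals. For $V^*(A)$, part 6 gives $\ext(V^*(A))=\ext(A^*)$ as the scaled columns indexed by $M(A)\cup\overline{N_c(A)}$; this union is disjoint because $M(A)\subseteq N_c(A)$, and the independence statements in the proof of part 6 show all these columns are pairwise non-proportional, so the count is $|M(A)|+|\overline{N_c(A)}|$.

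The only remaining point, which I regard as the crux, is to identify the number of strongly connected components of $C(A)$ with the number $n(C(A))$ of its (weakly) connected components. For this I would observe that, by definition, every critical edge lies on some critical cycle, and a cycle is a strongly connected subgraph, so the two endpoints of any critical edge lie in one common strongly connected component. Consequently $C(A)$ has no edges joining distinct strongly connected components, whence its weakly connected components coincide with its strongly connected components and $|M(A)|=n(C(A))$. Combining this with the two counts above yields that the max algebraic dimension of $V(A)$ equals $n(C(A))$ and that of $V^*(A)$ equals $n(C(A))+|\overline{N_c(A)}|$, as claimed.
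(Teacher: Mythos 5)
Your proof is correct and follows essentially the same route as the paper, whose own proof is just a one-line citation of parts 4 and 6 of Theorem \ref{subeigs} (together with Theorem \ref{mink} identifying the unique scaled basis with the set of scaled max extremals). The one point you elaborate that the paper leaves implicit --- that the strongly connected components of $C(A)$ coincide with its connected components because every critical edge lies on a critical cycle, so that $|M(A)|=n(C(A))$ --- is exactly the right justification and closes the only real gap between the cited parts and the stated count.
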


For $x\in\Rpn$ denote by $\diag(x)$ the diagonal matrix with entries
$\delta_{ij}x_i$, for $i,j\in[n]$, where $\delta_{ij}$ is the
Kronecker symbol (that is, $\delta_{ij}=1$ if $i=j$ and
$\delta_{ij}=0$ if $i\neq j$). Note that the max algebraic
multiplication by a diagonal matrix is not different from the
conventional multiplication, and therefore the notation $\otimes$
will be omitted in this case. If $x$ is positive, then $X=\diag(x)$
is invertible both in max algebra and in the ordinary linear
algebra, and the inverse $X^{-1}$ has entries $\delta_{ij}x_i^{-1}$,
for $i,j\in[n]$. The spectral properties of a matrix $A$ do not
change significantly if we apply a diagonal similarity scaling
$A\mapsto X^{-1}AX$, where $X=\diag(x)$, with a positive $x\in\Rpn$.


The following proposition follows very easily from results in the
diagonal scaling literature, see e.g. Remark 2.9 of \cite{ES}

\begin{proposition}
\label{diag-sim-eq}
Let $A\in\Rpnn$ and let $B=X^{-1}AX$, where $X=\diag(x)$, with positive $x\in\Rpn.$
Then
\begin{itemize}
\item[1.] $w(\sigma
,A)=w(\sigma ,B)$ for every cycle $\sigma$, hence $\lambda
(A)=\lambda (B)$ and $C(A)=C(B)$;
\item[2.] $V(A)=\{Xy\mid y\in V(B)\}$ and $V^*(A)=\{Xy\mid y\in V^*(B)\}$
\item[3.] $A$ is definite if and only if $B$ is definite, and in this case
$B^*=X^{-1}A^*X$.
\end{itemize}
\end{proposition}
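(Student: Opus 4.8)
The plan is to verify each of the three claims by reducing to the definitions of the objects involved and exploiting the fact that a diagonal similarity $A\mapsto X^{-1}AX$ scales entries along paths in a way that telescopes around cycles. Throughout I would write $B=X^{-1}AX$, so that entrywise $b_{ij}=x_i^{-1}a_{ij}x_j$, and keep in mind that $X=\diag(x)$ with positive $x$ is invertible in both algebras and that multiplication by a diagonal matrix coincides in max algebra and ordinary algebra.

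For part 1, the key computation is that for any cycle $\sigma=(i_1,\ldots,i_k,i_1)$ the weight $w(\sigma,B)=\prod b_{i_\ell i_{\ell+1}}$ telescopes: each factor $x_{i_\ell}^{-1}$ cancels against the factor $x_{i_\ell}$ contributed by the adjacent edge, so $w(\sigma,B)=w(\sigma,A)$. From the defining formula $\lambda(A)=\max_\sigma\mu(\sigma,A)$ and $\mu(\sigma,A)=w(\sigma,A)^{1/k}$, equality of all cycle weights immediately gives $\lambda(A)=\lambda(B)$, and since the critical cycles are exactly those attaining $\lambda$, their node and edge sets coincide, i.e. $C(A)=C(B)$.

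For part 2, I would argue that $x\in V^*(A)$, meaning $A\otimes y\le\lambda(A)y$, is equivalent to $B\otimes(X^{-1}y)\le\lambda(B)(X^{-1}y)$, using $B\otimes X^{-1}=X^{-1}A$ (since the diagonal multiplication commutes past $\otimes$ componentwise) and $\lambda(A)=\lambda(B)$ from part 1. Writing $z=X^{-1}y$, this says $y\in V^*(A)$ iff $z\in V^*(B)$ iff $y=Xz$ with $z\in V^*(B)$, which is the asserted set equality; the identical argument with $\le$ replaced by $=$ handles the eigencone $V(A)=\{Xy\mid y\in V(B)\}$. For part 3, definiteness is the condition $\lambda=1$, so $A$ definite iff $B$ definite is immediate from part 1. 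For the Kleene star identity, I would use the series definition and the conjugation identity $(X^{-1}AX)^{k}=X^{-1}A^{k}X$ for each power (each intermediate $X X^{-1}$ cancels), so that $B^*=I\oplus B\oplus B^2\oplus\cdots=X^{-1}(I\oplus A\oplus A^2\oplus\cdots)X=X^{-1}A^*X$, where the regrouping of the diagonal factors out of the max-sum is justified because $\diag(x)$ multiplies each entry monotonically and hence commutes with $\oplus$.

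The only place demanding genuine care is the regrouping step in part 3: one must check that factoring $X^{-1}(\cdot)X$ out of an infinite $\oplus$-series is legitimate, which rests on the fact that multiplication by a positive diagonal matrix is an order isomorphism and therefore distributes over $\oplus$. Once that is granted, convergence of the $A$-series (guaranteed by Theorem \ref{mcm-kleene} since $\lambda(A)=1$) transfers to convergence of the $B$-series and the identity follows. All remaining manipulations are routine telescoping and the elementary equivalences above.
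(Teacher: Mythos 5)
Your proof is correct. The paper itself gives no proof of this proposition --- it simply remarks that it ``follows very easily from results in the diagonal scaling literature'' and points to Remark 2.9 of the Engel--Schneider reference --- so your direct verification is exactly the argument being left to the reader. All three steps check out: the telescoping of the factors $x_{i_\ell}^{\pm 1}$ around a closed cycle gives part 1; conjugating the (sub)eigenvector inequality by the positive diagonal $X$, which is monotone and invertible, gives part 2; and for part 3 the identity $B^k=X^{-1}A^kX$ together with distributivity of positive diagonal multiplication over $\oplus$ yields $B^*=X^{-1}A^*X$. Your one point of caution (factoring $X^{-1}(\cdot)X$ out of the infinite $\oplus$-series) can be dispatched even more cheaply than you do: by Theorem \ref{mcm-kleene} the series truncates at $A^{n-1}$ once $\lambda(A)\leq 1$, so only a finite max-sum ever needs to be regrouped.
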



\section{Subeigenvectors, visualization and convexity}

We call $x\in\Rpn$ a {\em nonnegative linear combination} (resp.
a {\em log-convex combination}) of $y^1,\ldots,y^m\in\Rpn$, if $x=\sum_{i=1}^m \lambda_i y^i$
with $\lambda_i\geq 0$ (resp. $x=\prod_{i=1}^m (y^i)^{\lambda_i}$ with $\lambda_i\geq 0$
and $\sum_{i=1}^m \lambda_i=1$, and both power and multiplication taken componentwise).
The combinations are called {\em positive} if $\lambda_i>0$ for all $i$.
A set $K\subseteq\Rpn$ is called a {\em convex cone} (resp. a {\em log-convex set}),
if it is stable under linear combinations (resp. under log-convex combinations).

In max arithmetics, $a\oplus b\leq c$ is equivalent to
$a\leq c$ and $b\leq c$. Using this, one can write out a
system of very special homogeneous linear inequalities which define
the subeigencone of $A$, and 
hence this cone is also a convex cone and a log-convex set.

\begin{proposition}
\label{subeig-conv}
Let $A\in\Rpnn$ and $\lambda(A)>0$. Then $V^*(A)$ is a max cone, a convex cone
and a log-convex set.
\end{proposition}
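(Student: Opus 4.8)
The plan is to exploit the characterization already signalled in the paragraph preceding the statement: the single max-linear inequality $A\otimes x\leq\lambda(A)x$ unfolds into a finite family of elementary scalar inequalities. First I would write $V^*(A)=\{x\in\Rpn : a_{ij}x_j\leq\lambda(A)x_i \text{ for all } i,j\in[n]\}$. This follows because the $i$-th coordinate of $A\otimes x$ is $\max_j a_{ij}x_j$, and by the max-arithmetic rule recalled above, $\max_j a_{ij}x_j\leq\lambda(A)x_i$ holds exactly when $a_{ij}x_j\leq\lambda(A)x_i$ for every $j$. That $V^*(A)$ is a max cone has already been noted, and also follows at once from the monotonicity and homogeneity of max-matrix multiplication; so I would spend the effort on the remaining two properties, each of which reduces to checking that these defining inequalities are preserved.

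For the convex cone property, observe that each constraint $a_{ij}x_j\leq\lambda(A)x_i$ is a homogeneous linear inequality in the unknown vector $x$. Given $x,y\in V^*(A)$ and scalars $\alpha,\beta\geq 0$, the vector $\alpha x+\beta y$ lies in $\Rpn$, and for each pair $(i,j)$ one has $a_{ij}(\alpha x_j+\beta y_j)=\alpha a_{ij}x_j+\beta a_{ij}y_j\leq\alpha\lambda(A)x_i+\beta\lambda(A)y_i=\lambda(A)(\alpha x_i+\beta y_i)$. Hence $\alpha x+\beta y\in V^*(A)$, so $V^*(A)$ is closed under nonnegative linear combinations.

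For the log-convex property I would argue directly and multiplicatively, which has the advantage of handling zero entries without any appeal to logarithms. Given $x,y\in V^*(A)$ and $\lambda_1,\lambda_2\geq 0$ with $\lambda_1+\lambda_2=1$, set $z_k=x_k^{\lambda_1}y_k^{\lambda_2}$ componentwise. Writing $a_{ij}=a_{ij}^{\lambda_1}a_{ij}^{\lambda_2}$ and $\lambda(A)=\lambda(A)^{\lambda_1}\lambda(A)^{\lambda_2}$, and using that on $\Rp$ raising to a nonnegative power is monotone and that $0\leq a\leq b,\ 0\leq c\leq d$ force $ac\leq bd$, one computes $a_{ij}z_j=(a_{ij}x_j)^{\lambda_1}(a_{ij}y_j)^{\lambda_2}\leq(\lambda(A)x_i)^{\lambda_1}(\lambda(A)y_i)^{\lambda_2}=\lambda(A)z_i$. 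Thus $z\in V^*(A)$.

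The computations themselves are routine; the only point requiring care is the treatment of zero coordinates in the log-convex step, and this is precisely why I prefer the multiplicative argument (with the conventions $0^\lambda=0$ for $\lambda>0$ and $0^0=1$) over passing to logarithmic coordinates, where a zero entry of $x$ or $y$ would produce $-\infty$ and the linear-inequality bookkeeping becomes awkward. I expect no genuine obstacle beyond this bookkeeping, since the essential content is that the subeigencone is cut out by monomial (equivalently, log-linear) inequalities, each of which is simultaneously linear in $x$ and affine in $\log x$.
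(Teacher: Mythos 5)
Your proposal is correct and follows essentially the same route as the paper: both rewrite $V^*(A)$ as the intersection of the elementary constraint sets $\{x \mid a_{ij}x_j\leq\lambda(A)x_i\}$ and observe that each of the three closure properties is preserved. You merely spell out the verifications (and the zero-coordinate conventions for the log-convex step) that the paper leaves implicit.
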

\begin{proof}
We have that
\begin{equation*}
\label{v*a1}
\begin{split}
V^*(A)&=\{y\mid A\otimes y\leq\lambda(A)y\}=\{y\mid
{\sum_j}^{\oplus} a_{ij} y_j\leq
\lambda(A)y_i\; \forall i\}=\\
&=\{y\mid a_{ij} y_j\leq\lambda(A) y_i\;\forall i,j\}.
\end{split}
\end{equation*}
Each set $\{y\mid a_{ij} y_j\leq\lambda(A)y_i\}$ is a max cone, a convex cone and
a log-convex set, hence the same
is true about $V^*(A)$, which is the intersection of these sets.
\end{proof}\eproof

The log-convexity in $(\R_+\backslash\{0\})^n$ (i.e. in the max-times
setting) corresponds to the conventional convexity in $\R^n$
(i.e., the max-plus setting or the min-plus setting). We also note
that $\{y\mid a_{ij}y_j\leq\lambda(A) y_i\}$ and hence $V^*(A)$ are
closed under some other operations. In particular, $V^*(A)$ is closed under
componentwise $p$-norms $\oplus_p$ defined by $(y\oplus_p z)_i=(y_i^p+z_i^p)^{1/p}$
for $p>0$.

Prop. \ref{subeig-conv} raises a question whether or not there exist max cones
containing positive vectors,
which are finitely generated and convex, other than Kleene cones.
The results of \cite{JK-08} suggest that the answer is negative.

Let $K$ be a convex cone, then $y\in K$ is called an {\em extremal}
of $K$ if and only if $y=\lambda u+\mu v$, where $u,v\in K$, implies
$y\sim u$ (and hence also $y\sim v$). The set of scaled extremals of $K$
will be denoted by $\extconv(K)$.

\begin{proposition}
Let $A\in\Rpnn$ and $\lambda(A)>0$, then $\ext(V^*(A))\subseteq\extconv(V^*(A))$.
\end{proposition}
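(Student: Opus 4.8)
The plan is to reduce to the definite case and then exhibit, for each scaled max-extremal of $V^*(A)$, a characteristic coordinate that forces convex extremality. Since $\lambda(A)>0$, replacing $A$ by $A/\lambda(A)$ changes neither $V^*(A)$ nor either notion of extremality, so I would assume $A$ definite. By Proposition \ref{subeigstar} we have $V^*(A)=\spann(A^*)$, and by part 6 of Theorem \ref{subeigs} every element of $\ext(V^*(A))$ is a scaled column $A^*_{\cdot k}$ with $k\in M(A)\cup\overline{N_c(A)}$. Because being an extreme ray of a convex cone is invariant under positive scaling, it suffices to show that each column $A^*_{\cdot k}$ spans an extreme ray of the convex cone $V^*(A)$, which is convex by Proposition \ref{subeig-conv}.

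The key step is a minimality inequality: for every $z\in V^*(A)=\spann(A^*)$ and every index $k$ one has $z\ge z_k A^*_{\cdot k}$ componentwise, i.e. $z_i\ge a^*_{ik}z_k$ for all $i$. To prove this I would first note that for a fixed pair $(i,k)$ the set $\{z\in\Rpn\mid z_i\ge a^*_{ik}z_k\}$ is a max cone, so it is enough to verify the inequality on the generating columns $A^*_{\cdot m}$. For such a generator the inequality reads $a^*_{im}\ge a^*_{ik}a^*_{km}$, which is exactly the submultiplicativity contained in $(A^*)^2=A^*$ from Proposition \ref{char-kleene}. Taking the max over the generating columns then yields the inequality for every $z$.

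With this in hand the convex extremality is immediate. Suppose $A^*_{\cdot k}=\lambda u+\mu v$ with $\lambda,\mu>0$ and $u,v\in V^*(A)$. Applying the minimality inequality to $u$ and to $v$ gives $u\ge u_k A^*_{\cdot k}$ and $v\ge v_k A^*_{\cdot k}$, whence
\begin{equation*}
A^*_{\cdot k}=\lambda u+\mu v\ge(\lambda u_k+\mu v_k)A^*_{\cdot k}.
\end{equation*}
Reading the $k$-th coordinate and using $a^*_{kk}=1$ (Proposition \ref{char-kleene}) gives $\lambda u_k+\mu v_k=1$, so the displayed inequality is an equality. Since $\lambda u-\lambda u_k A^*_{\cdot k}\ge 0$ and $\mu v-\mu v_k A^*_{\cdot k}\ge 0$ componentwise while their sum vanishes, each difference is zero; thus $u=u_k A^*_{\cdot k}$ is a nonnegative multiple of $A^*_{\cdot k}$, so $u\sim A^*_{\cdot k}$ (and likewise $v$). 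Hence the ray through $A^*_{\cdot k}$ is extreme, its scaled representative lies in $\extconv(V^*(A))$, and as these representatives exhaust $\ext(V^*(A))$ we obtain $\ext(V^*(A))\subseteq\extconv(V^*(A))$.

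The main obstacle is locating the right characteristic coordinate: the argument hinges on the diagonal entry $a^*_{kk}=1$, which is precisely what collapses the a priori inequality $A^*_{\cdot k}\ge(\lambda u_k+\mu v_k)A^*_{\cdot k}$ into an equality. Establishing the minimality inequality of the second paragraph, and in particular recognizing it as a reformulation of $(A^*)^2=A^*$, is the crux; everything after it is routine.
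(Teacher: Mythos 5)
Your proposal is correct and follows essentially the same route as the paper: reduce to the definite case, identify the scaled max extremals with columns of $A^*$, establish the inequality $z_i\ge a^*_{ik}z_k$ for all $z\in V^*(A)$ (the paper reads it off directly from $A^*\otimes z\le z$ via $V^*(A)=V^*(A^*)$, you verify it on the generating columns using $(A^*)^2=A^*$ -- the same fact), and then force equality in the decomposition $A^*_{\cdot k}=\lambda u+\mu v$ using $a^*_{kk}=1$. The only cosmetic difference is your explicit scalar identity $\lambda u_k+\mu v_k=1$; the substance is identical.
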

\begin{proof}
Without loss of generality we assume that $A$ is definite.
By Theorem \ref{subeigs} part 6., $\ext(V^*(A))$ is the set of
scaled columns of $A^*$, after eliminating the repetitions. As
$a^*_{ik} a_{kk}^*=a_{ik}^*$, for all $i,k\in[n]$, we have that the
$x:=A^*_{\cdot k}$ satisfies $a_{ik}^* x_k=x_i$ for all $i\in[n]$.
As $V^*(A)=V^*(A^*)$ by Proposition \ref{subeigstar}, we have that
$a_{ik}^* z_k\leq z_i$ for any $z\in V^*(A)$ and all $i\in[n]$,
implying that if $x=\lambda z^1+\mu z^2$ with $z^1,z^2\in V^*(A)$,
then $a^*_{ik}z_k^s=z_i^s$ for all $i\in[n]$ and $s=1,2$. Hence
$z^1\sim x$ and $z^2\sim x$ meaning that $x\in\extconv(V^*(A))$.
\end{proof}

We note that the convex extremals $\extconv(V^*(A))$ correspond to the
pseudovertices of tropical polytropes
\cite{JK-08} (Kleene cones in the min-plus setting),
and it is known that the number of these may be up to
$(2(n-1))!/(n-1)!$ \cite{DS-04,JK-08}, unlike the number of max extremals
$\ext(V^*(A))$ which is not more than $n$.

Max algebraic subeigenvectors give rise to useful diagonal
similarity scalings. A matrix $A$ is called {\em visualized} (resp.
{\em strictly visualized}), if $a_{ij}=\lambda(A)$ for all $(i,j)\in
E_c(A)$, and $a_{ij}\leq\lambda(A)$ for all $(i,j)\notin E_c(A)$
(resp. $a_{ij}<\lambda(A)$ for all $(i,j)\notin E_c(A)$).

In the context of max algebra, visualizations have been used to
obtain better bounds on the convergence of the power method
\cite{ED-99,ED-01}. Strong links between diagonal scaling and max
algebra were established in \cite{BS}.

Specifically, Corollary 2.9 of \cite{BS} shows that for a definite
$A\in\Rpnn$, $X^{-1}AX$ is visualized if and only if $X = \diag(x)$
where $x$ is nonnegative linear combination of the columns of $A^*$
that is positive.

 Strict visualization was treated in a special case
\cite{But-03}, in connection with the strong regularity of
max-plus matrices.

A preliminary version of the following theorem appeared in
\cite{BS-07}.

\begin{theorem}
\label{fiedler-ptak1}
Let $A\in\Rpnn$ be definite and $X=\diag(x)$ with positive $x\in\Rpn$.
Then $X^{-1}AX$ is strictly visualized if any of the following
conditions are true:
\begin{itemize}
\item[1.] $x$ is a positive linear combination of
all columns of $A^*$;
\item[2.] $A$ is irreducible and $x$ is a positive
log-convex combination of all columns of $A^*$.
\end{itemize}
\end{theorem}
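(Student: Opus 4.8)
The plan is to unwind the definition of strict visualization for $B=X^{-1}AX$ and reduce it to two pointwise statements about $A^*$ and $x$. Since $A$ is definite and $x>0$, Proposition~\ref{diag-sim-eq} gives $\lambda(B)=\lambda(A)=1$ and $E_c(B)=E_c(A)$, while $b_{ij}=x_i^{-1}a_{ij}x_j$. Thus $B$ is strictly visualized precisely when
\begin{equation*}
a_{ij}x_j=x_i \ \text{ for } (i,j)\in E_c(A), \qquad\text{and}\qquad a_{ij}x_j<x_i \ \text{ for } (i,j)\notin E_c(A).
\end{equation*}
Two ingredients will drive both parts. First, from $A\otimes A^*\le A^*$ one reads off the universal bound $a_{ij}a^*_{jk}\le a^*_{ik}$ for all $i,j,k\in[n]$. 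Second, part~1 of Theorem~\ref{subeigs} says $(i,j)\in E_c(A)$ iff $a_{ij}a^*_{jk}=a^*_{ik}$ for all $k$; hence $(i,j)\notin E_c(A)$ exactly when there is a \emph{witness index} $k_0$ with strict inequality $a_{ij}a^*_{jk_0}<a^*_{ik_0}$.

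For part~1, I would write $x_i=\sum_{k}\lambda_k a^*_{ik}$ (an ordinary linear combination) with all $\lambda_k>0$, so in particular $x_i\ge\lambda_i a^*_{ii}=\lambda_i>0$. On a critical edge, substituting $a^*_{ik}=a_{ij}a^*_{jk}$ termwise gives $x_i=a_{ij}\sum_k\lambda_k a^*_{jk}=a_{ij}x_j$. On a non-critical edge the universal bound gives $a_{ij}x_j=\sum_k\lambda_k a_{ij}a^*_{jk}\le\sum_k\lambda_k a^*_{ik}=x_i$, and replacing the $k_0$-term (where $a_{ij}a^*_{jk_0}<a^*_{ik_0}$ and $\lambda_{k_0}>0$) by the strict estimate turns the inequality strict. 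This settles both required conditions.

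For part~2, irreducibility together with $\lambda(A)\le 1$ gives, by Proposition~\ref{kls-pos}, that $A^*$ is entrywise positive; hence the log-convex combination $x_i=\prod_k (a^*_{ik})^{\lambda_k}$, with $\lambda_k>0$ and $\sum_k\lambda_k=1$, is well defined and positive. The cleanest route is to examine the ratio $a_{ij}x_j/x_i=\prod_k\bigl(a_{ij}a^*_{jk}/a^*_{ik}\bigr)^{\lambda_k}$, using $\sum_k\lambda_k=1$ to factor $a_{ij}=\prod_k a_{ij}^{\lambda_k}$. On a critical edge every factor equals $1$, so the ratio is $1$ and $a_{ij}x_j=x_i$. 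On a non-critical edge with $a_{ij}>0$, every factor is $\le 1$ by the universal bound and the $k_0$-factor is $<1$ with positive exponent $\lambda_{k_0}$, so the product is $<1$, i.e. $a_{ij}x_j<x_i$; and if $a_{ij}=0$ the strict inequality is immediate since $x_i>0$.

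The main obstacle is not any single estimate but recognizing that the entire gap between \emph{visualized} and \emph{strictly visualized} is encoded in the witness index $k_0$ from the equivalence in Theorem~\ref{subeigs}.1, and that insisting on strictly positive coefficients on \emph{all} columns of $A^*$ is exactly what forces that witness to contribute a strict inequality to the combination. In the log-convex case the extra care lies in passing to the factored ratio and in handling the degenerate possibility $a_{ij}=0$, both of which become harmless once the positivity of $A^*$ furnished by irreducibility is in hand.
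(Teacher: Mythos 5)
Your proposal is correct and follows essentially the same route as the paper's proof: both reduce strict visualization to the edgewise conditions, use the equivalence in Theorem~\ref{subeigs} part~1 to get termwise equality on critical edges and a strict-inequality witness column on non-critical edges, and then sum with positive coefficients (resp.\ take a weighted geometric mean) to propagate the strictness, with positivity of $x$ secured by $a^*_{ii}=1$ in case~1 and by Proposition~\ref{kls-pos} in case~2. The only cosmetic difference is that the paper cites Theorem~\ref{subeigs} part~5 for the critical-edge equality and names the explicit witness $z=A^*_{\cdot i}$, while you re-derive the equality termwise and work with an abstract witness index $k_0$.
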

\begin{proof}
The following argument goes for both cases. In both cases, $x$ is
positive: for positive linear combinations this is true since
$a^*_{ii}=1$ for all $i$, and for positive log-convex combinations,
Prop.~\ref{kls-pos} assures that $A^*$ is positive if $A$ is
irreducible. As $x\in V^*(A)$, we have that $a_{ij}x_j\leq x_i$ for
all $i,j$. By Theorem~\ref{subeigs} part~5., $a_{ij}x_j=x_i$ for
all $(i,j)\in E_c(A)$. If $(i,j)\notin E_c(A)$, then, by
Theorem~\ref{subeigs} part~1., $a_{ij}z_j<z_i$ for $z=A^*_{\cdot
i}$, while $a_{ij}z_j\leq z_i$ for all $z=A^*_{\cdot k}$ where $k\in[n]$. After
summing these inequalities for all $z=A^*_{\cdot k}$ with positive
coefficients, or after raising them in positive powers and
multiplying, we obtain that $a_{ij}x_j<x_i$, taken into account the
strict inequality for $z=A^*_{\cdot i}$. Thus $x$ is positive,
$x_i^{-1}a_{ij}x_j=1$ for all $(i,j)\in E_c(A)$ and $x_i^{-1}a_{ij}
x_j<1$ for all $(i,j)\notin E_c(A)$.
\end{proof}\eproof

Note that if $A$ is definite, then every column of $A^*$ can be used to obtain a visualization
of $A$, which may not be strict. This result was known to 
Afriat \cite{A:63,A:74} and Fiedler and Pt\'{a}k \cite{FP},
and it has been a source of inspiration for many works
on scaling problems, see \cite{ES,ES:75,HS,RSS:92,SS:90,SS:91}.

Theorem \ref{fiedler-ptak1} implies the following.

\begin{proposition}
\label{fiedler-ptak2}
Let $A$ have $\lambda(A)>0$, then there exists $X=\diag(x)$ with positive $x\in\Rpn$
such that $X^{-1}AX$ is strictly visualized.
\end{proposition}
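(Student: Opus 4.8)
The plan is to reduce to the definite case, where Theorem~\ref{fiedler-ptak1} applies directly, and then transfer the conclusion back by scalar scaling. Since $\lambda(A)>0$, I would set $B:=A/\lambda(A)$, which is definite because $\lambda(B)=\lambda(A)/\lambda(A)=1$. The key point, already recorded in the discussion following Theorem~\ref{mcm-kleene}, is that multiplying $A$ by the positive scalar $1/\lambda(A)$ rescales the geometric mean of every cycle by the same factor; consequently the maximizing cycles are unchanged, so $B$ has exactly the same critical digraph as $A$, i.e. $E_c(B)=E_c(A)$ with $\lambda(B)=1$.

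Next I would invoke Theorem~\ref{fiedler-ptak1}, part~1. Since $B$ is definite, its Kleene star $B^*$ exists, and choosing $x$ to be a positive linear combination of all columns of $B^*$ produces a positive $x\in\Rpn$ for which $X=\diag(x)$ strictly visualizes $B$. Explicitly, writing $B'=X^{-1}BX$, we have $b'_{ij}=\lambda(B)=1$ for all $(i,j)\in E_c(B)$ and $b'_{ij}<1$ for all $(i,j)\notin E_c(B)$. This is the entire substantive content of the statement; the remaining work is purely a matter of rescaling.

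Finally I would transfer back to $A$. Because diagonal scaling commutes with multiplication by a scalar, $X^{-1}AX=\lambda(A)\,X^{-1}BX=\lambda(A)\,B'$, so the entries of $X^{-1}AX$ are exactly $\lambda(A)$ times those of $B'$. On critical edges this gives $\lambda(A)\cdot 1=\lambda(A)$, and off critical edges it gives $\lambda(A)\cdot(\text{something}<1)<\lambda(A)$. To conclude that this is strict visualization in the sense of the definition, I must identify the relevant $\lambda$ and critical set for the scaled matrix itself: by Proposition~\ref{diag-sim-eq}, part~1, diagonal similarity preserves all cycle weights, hence $\lambda(X^{-1}AX)=\lambda(A)$ and $E_c(X^{-1}AX)=E_c(A)=E_c(B)$. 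Combining these identifications with the displayed entrywise bounds shows precisely that $X^{-1}AX$ is strictly visualized.

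There is no genuine obstacle here beyond bookkeeping: the only thing to be careful about is that the two operations used, scalar scaling $A\mapsto A/\lambda(A)$ and diagonal similarity $A\mapsto X^{-1}AX$, both leave the value of $\lambda$ and the critical edge set intact (up to the obvious scalar factor for $\lambda$), so that the strict visualization established for the definite representative $B$ is inherited by $A$. Both facts are supplied by the earlier scaling discussion and by Proposition~\ref{diag-sim-eq}, so the argument is short.
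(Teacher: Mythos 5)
Your proposal is correct and follows exactly the route the paper intends: the paper simply states that Theorem~\ref{fiedler-ptak1} implies this proposition, leaving implicit the reduction to the definite matrix $A/\lambda(A)$ and the transfer back, which you have spelled out (correctly, using the invariance of the critical digraph under scalar scaling and Proposition~\ref{diag-sim-eq} for the diagonal similarity). No gaps; your write-up just makes the bookkeeping explicit.
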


If $A$ is definite and irreducible then $A^*$ is irreducible, and in
this case $A^*$ has an essentially unique positive linear algebraic
eigenvector, called the {\em Perron eigenvector} \cite{BP}. As it is
a positive linear combination of the columns of $A^*$, we have the
following.

\begin{proposition}
\label{Perron}
Let $A\in\Rpnn$ be definite and irreducible and let $x$ be the Perron eigenvector
of $A^*$. Then $X^{-1}AX$, for $X=\diag(x)$, is strictly visualized.
\end{proposition}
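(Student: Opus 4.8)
The plan is to reduce this statement entirely to part 1 of Theorem \ref{fiedler-ptak1}, which already guarantees that $X^{-1}AX$ is strictly visualized whenever $x$ is a positive linear combination of all columns of $A^*$. So the whole task is to verify that the Perron eigenvector of $A^*$ is exactly such a combination; once that is done, no further work on the visualization inequalities is needed.

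First I would confirm that $A^*$ is well defined and positive. Since $A$ is definite we have $\lambda(A)=1\leq 1$, so Theorem \ref{mcm-kleene} guarantees that the Kleene star $A^*$ exists. Because $A$ is irreducible, Proposition \ref{kls-pos} ensures that every entry of $A^*$ is positive, so $A^*$ is in particular an irreducible nonnegative matrix. By the Perron--Frobenius theorem it therefore possesses an essentially unique positive eigenvector $x$, in the ordinary linear-algebraic sense, associated with its Perron root $\rho>0$.

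The key step is then to read off the required combination directly from the eigenvalue equation. Writing $A^*x=\rho x$ columnwise gives $x=\rho^{-1}\sum_{k=1}^n A^*_{\cdot k}\,x_k$. Since $x$ is the Perron eigenvector, every $x_k$ is positive, and $\rho^{-1}>0$; hence every coefficient $\rho^{-1}x_k$ is positive, and $x$ is a positive linear combination of all columns of $A^*$.

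With this established, part 1 of Theorem \ref{fiedler-ptak1} applies verbatim and yields that $X^{-1}AX$ is strictly visualized. I do not expect a genuine obstacle: the only point that requires any attention is the simple observation that the linear-algebraic eigenvalue equation $A^*x=\rho x$ is itself the statement that $x$ is a positive linear combination of the columns of $A^*$, after which all of the substantive work has already been carried out in Theorem \ref{fiedler-ptak1}.
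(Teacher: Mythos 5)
Your proposal is correct and follows essentially the same route as the paper: the paper likewise observes that the irreducibility of $A$ makes $A^*$ positive (hence irreducible), that the Perron eigenvector is positive and, via the eigenvalue equation, is a positive linear combination of all columns of $A^*$, and then invokes part 1 of Theorem \ref{fiedler-ptak1}. The only difference is that you spell out the columnwise rewriting $x=\rho^{-1}\sum_k A^*_{\cdot k}x_k$ explicitly, which the paper leaves as an immediate remark.
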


We will now give a topological description
of strict visualization scalings, using the
linear hull and relative interior of $V^*(A)$.

By Theorem \ref{subeigs} part 5., for all $y\in V^*(A)$ and $(i,j)\in E_c(A)$ we have
$a_{ij} y_j=y_i$. This can be formulated geometrically.
For $A\subseteq\Rpn$ consider the set
\begin{equation*}
\label{eq:pca}
\Lin(C(A))=\{x\in\Rn\mid a_{ij} x_j=\lambda(A)x_i\; \forall (i,j)\in E_c(A)\}.
\end{equation*}
This is a linear subspace of $\R^n$ which
contains both $V^*(A)$ (as its convex subcone)
and $V(A)$ (as a max subcone of $V^*(A)$). If $B=X^{-1}AX$ with
$X=\diag(x)$ and $x$ positive, then, by Prop.~\ref{diag-sim-eq},
we have $C(A)=C(B)$, and we infer that $\Lin(C(A))=\{Xy\mid y\in\Lin(C(B))\}$.

\noindent

Let $K$ be a convex cone. The least linear space which contains $K$ will be
called the {\em linear hull} of $K$ and denoted by
$\lin(K)$. This is a special case of the affine hull of a convex set,
see \cite{Gru:67}. Denote by $B_y^{\varepsilon}$ the
open ball with radius $\varepsilon>0$ and centered at $y$.
The {\em relative interior} of $K$, denoted by $\ri(K)$,
is the set of points
$y\in\Rpn$ such that for
sufficiently small $\varepsilon$ we have that $B_y^{\varepsilon}\cap\lin(K)\subseteq K$.
If $\lin(K)=\R^n$, then it is the {\em interior} of $K$, denoted by $\intererer(K)$.

The following important ``splitting'' lemma can be deduced from \cite{Zie:94},
Lemma 2.9. 
\begin{lemma}
\label{split}
Suppose that $K\subseteq\Rpn$ is a convex cone which is a solution set of
a finite system of linear inequalities $S$. Let
$S_1$ be composed of the inequalities of $S$ which are satisfied by
all points in $K$ with equality, and $S_2:=S\backslash S_1$ be non-empty.
\begin{itemize}
\item[1.] There exists a point in $K$ by which
all inequalities in $S_2$ are satisfied strictly.
\item[2.] $\lin(K)$ is the solution set to $S_1$,
and $\ri(K)$ is the cone which consists of the points in $K$ by
which all inequalities in $S_2$ are satisfied strictly.
\end{itemize}
\end{lemma}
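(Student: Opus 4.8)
The plan is to reduce this lemma to the cited result (Lemma~2.9 of \cite{Zie:94}), whose natural home is polyhedral convex geometry. The statement concerns a convex cone $K\subseteq\Rpn$ presented as the solution set of a finite system $S$ of linear inequalities; I split $S$ into the implicit equalities $S_1$ (satisfied with equality by every point of $K$) and the remaining strict-capable inequalities $S_2$. The two parts of the lemma---existence of a point satisfying all of $S_2$ strictly, and the identification of $\lin(K)$ and $\ri(K)$---are essentially the standard facts about the \emph{minimal face} and the relative interior of a polyhedron, translated into the language this paper uses.

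First I would establish part~1. The key observation is that for each single inequality in $S_2$, by the very definition of $S_1$ there is \emph{some} point of $K$ satisfying that inequality strictly (otherwise it would belong to $S_1$). If $S_2=\{\ell_1(x)\le 0,\ldots,\ell_r(x)\le 0\}$ and $y^{(t)}\in K$ satisfies $\ell_t(y^{(t)})<0$ while meeting the rest of $S$, then I would take a convex combination $\bar y=\tfrac1r\sum_{t=1}^r y^{(t)}$. Since $K$ is convex, $\bar y\in K$; since each $\ell_t$ is linear and $\ell_t(y^{(s)})\le 0$ for all $s$ with strict inequality at $s=t$, the combination gives $\ell_t(\bar y)<0$ for every $t$. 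Thus $\bar y$ satisfies all of $S_2$ strictly, and all of $S_1$ with equality by definition, so $\bar y\in K$. This is the constructive core, and it is where I expect the only real subtlety: one must check that averaging does not destroy feasibility, which is immediate here precisely because $K$ is cut out by inequalities and is convex.

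For part~2, I would argue that the solution set of $S_1$ is a linear subspace $L$ containing $K$, so $\lin(K)\subseteq L$. For the reverse inclusion I invoke part~1: the point $\bar y$ lies in the relative interior sense, and a small ball around $\bar y$ inside $L$ stays in $K$, because $\bar y$ satisfies $S_2$ strictly (so small perturbations preserve those strict inequalities) and staying in $L$ preserves the equalities $S_1$. Hence $\bar y$ is an interior point of $K$ relative to $L$, which forces $L\subseteq\lin(K)$ and therefore $L=\lin(K)$. The same ball argument identifies $\ri(K)$: a point $y\in K$ lies in $\ri(K)$ exactly when a relative neighborhood stays in $K$, and since motion within $L=\lin(K)$ automatically respects $S_1$, the only constraints that can fail under perturbation are those in $S_2$; thus $y\in\ri(K)$ iff $y$ satisfies every inequality of $S_2$ strictly.

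The main obstacle is not any deep computation but the bookkeeping of matching the paper's definition of $\ri(K)$ (via $B_y^{\varepsilon}\cap\lin(K)\subseteq K$) to the polyhedral statement in \cite{Zie:94}, and making sure the empty-$S_2$ degenerate case is excluded by hypothesis (it is, since $S_2$ is assumed non-empty). Once the averaging point $\bar y$ is in hand, everything else is a routine openness argument, so I would present part~1 carefully and treat part~2 as its corollary.
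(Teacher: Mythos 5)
Your argument is correct, but it is worth noting that the paper gives no proof of this lemma at all: it is stated as a consequence of Lemma~2.9 of \cite{Zie:94}, so your self-contained derivation is genuinely different in character from what the authors do. What you supply is the standard polyhedral argument behind that citation: for each inequality of $S_2$ there is, by the very definition of $S_1$, a witness in $K$ satisfying it strictly, and averaging the witnesses produces one point of $K$ satisfying all of $S_2$ strictly (part~1); then the solution set $L$ of $S_1$ read as equalities contains $K$, while the averaged point has a relative neighbourhood $B_{\bar y}^{\varepsilon}\cap L\subseteq K$, which forces $L=\lin(K)$ and yields the description of $\ri(K)$ (part~2). This buys a proof that is elementary and matches the paper's own definition of $\ri(K)$ directly, at the cost of a few lines that the citation avoids. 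The only place you are slightly too brisk is the ``only if'' half of the $\ri(K)$ characterization: if $y\in K$ satisfies some $\ell\le 0$ from $S_2$ with equality, you should exhibit points of $L$ arbitrarily close to $y$ that violate it, e.g.\ $y+t(y-\bar y)$ for small $t>0$, which lies in $L$ and has $\ell\bigl(y+t(y-\bar y)\bigr)=-t\,\ell(\bar y)>0$; this uses the point $\bar y$ from part~1 and closes the gap hidden in the phrase ``the only constraints that can fail under perturbation are those in $S_2$.'' With that sentence added, the proof is complete.
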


Now we describe all scalings that give rise to strict visualization.

\begin{theorem}
\label{relint2} Let $A\in\Rpnn$ and let $\lambda(A)>0$.
\begin{itemize}
\item[1.] $\Lin(C(A))$ is the linear hull of the subeigencone
$V^*(A)$.
\item[2.] $x\in\ri(V^*(A))$ if and only if, for $X=diag(x)$,
the matrix $X^{-1}AX$ is strictly visualized.
\item[3.] $\ri(V^*(A))$ contains the eigenvectors of $A$ if and only if
$V^*(A)=V(A)$.
\item[4.] If $A$ is definite, then any positive linear combination, and, if $A$
is irreducible, also any positive log-convex combination $x$ of all
columns of $A^*$ belongs to $\ri(V^*(A))$  and $X^{-1}AX$ with
$X=\diag(x)$ is strictly visualized.
\end{itemize}
\end{theorem}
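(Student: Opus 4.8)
The plan is to present $V^*(A)$ as a polyhedral convex cone cut out by an explicit finite system of linear inequalities, and then to read off its linear hull and relative interior directly from the splitting Lemma~\ref{split}. I may assume throughout that $A$ is definite: passing from $A$ to $A/\lambda(A)$ leaves $V^*(A)$, $E_c(A)$, the set $\Lin(C(A))$ and $M(A)$ unchanged, and it preserves strict visualization, since for $B=X^{-1}AX$ the entry $b_{ij}$ is $<\lambda(A)$ (resp.\ $=\lambda(A)$) if and only if $b_{ij}/\lambda(A)<1$ (resp.\ $=1$), while $E_c$ is scaling invariant by Prop.~\ref{diag-sim-eq}. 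With $\lambda(A)=1$ I take for $S$ the system
\begin{equation*}
a_{ij}y_j\le y_i\ (\forall i,j),\qquad y_i\ge 0\ (\forall i),
\end{equation*}
whose solution set is exactly $V^*(A)$ by Prop.~\ref{subeig-conv}.

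The heart of the argument is the split $S=S_1\cup S_2$ of Lemma~\ref{split}. I claim $S_1$ is precisely the set of inequalities indexed by critical edges. That each critical edge holds with equality throughout $V^*(A)$ is Theorem~\ref{subeigs} part~5. For the converse I must show every remaining inequality is strict at some point of $V^*(A)$, and the natural test vectors are the columns of $A^*$, which lie in $V^*(A)$ by Prop.~\ref{subeigstar}. The column $y=A^*_{\cdot i}$ has $y_i=a^*_{ii}=1>0$, so every nonnegativity constraint is strict somewhere; and for a non-critical edge $(i,j)$ this same column gives $a_{ij}y_j=a_{ij}a^*_{ji}$, which is the maximal weight of a cycle through the edge $(i,j)$ and is therefore strictly below $1$ exactly because $(i,j)$ lies on no critical cycle (this is also encoded in Theorem~\ref{subeigs} part~1). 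This strict-inequality step is where the combinatorics of the critical graph enters and is the part I expect to be the main obstacle, as I must match the strict constraints to the non-critical edges edge-by-edge and keep track of the nonnegativity constraints, all of which fall into $S_2$ (so $S_2\ne\emptyset$ automatically). Granting the claim, Lemma~\ref{split} part~2 identifies $\lin(V^*(A))$ with the solution set of $S_1$, namely $\{y: a_{ij}y_j=y_i\ \forall (i,j)\in E_c(A)\}=\Lin(C(A))$, which is part~1; and it identifies $\ri(V^*(A))$ with the set of $y\in V^*(A)$ for which $y_i>0$ for all $i$ and $a_{ij}y_j<y_i$ for all $(i,j)\notin E_c(A)$. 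For positive $x$ the entries of $X^{-1}AX$ are $x_i^{-1}a_{ij}x_j$, the critical ones equal to $1$ automatically once $x\in V^*(A)$ by Theorem~\ref{subeigs} part~5; so this description is precisely strict visualization, and translating the inequalities gives both directions of part~2, with positivity of $x$ built into membership of $\ri(V^*(A))$ on one side and required for $X^{-1}$ to exist on the other.

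Parts~3 and 4 are then short. Part~4 follows at once from part~2 and Theorem~\ref{fiedler-ptak1}: the latter already shows that a positive linear combination of the columns of $A^*$ (and, when $A$ is irreducible, a positive log-convex combination) strictly visualizes $A$, so by part~2 such an $x$ lies in $\ri(V^*(A))$. For part~3 I would use that a strict visualizer $x$ satisfies the eigenvector equation exactly at the critical nodes: if $i\in N_c(A)$ some critical edge $(i,j)$ gives $a_{ij}x_j=x_i$ and hence $(A\otimes x)_i=x_i$, whereas if $i\in\overline{N_c(A)}$ every edge out of $i$ is non-critical, so $a_{ij}x_j<x_i$ for all $j$ and $(A\otimes x)_i<x_i$. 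Thus $\ri(V^*(A))$ consists of eigenvectors if and only if $\overline{N_c(A)}=\emptyset$, and by Theorem~\ref{subeigs} parts~4 and~6 (equivalently Prop.~\ref{maxdim}) this is exactly the condition $V^*(A)=V(A)$; the remaining implication is immediate since $\ri(V^*(A))\subseteq V^*(A)$. I would finally check that the normalisations and any degenerate cases (for instance a non-critical node present but no positive eigenvector) do not disturb the equivalence, which is safe because the whole argument is phrased through $\ri(V^*(A))$ rather than through individual eigenvectors.
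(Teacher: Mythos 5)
Your proposal is correct and follows essentially the same route as the paper: both apply the splitting Lemma~\ref{split} to the linear inequality system cutting out $V^*(A)$, identify $S_1$ with the critical-edge constraints via Theorem~\ref{subeigs}, certify strictness of the remaining constraints using the columns of $A^*$, and deduce parts~3 and~4 from part~2 together with Theorem~\ref{fiedler-ptak1}. The only minor differences are that you include the nonnegativity constraints in $S$ and witness strictness edge-by-edge with individual columns of $A^*$, whereas the paper invokes the single strict visualizer supplied by Prop.~\ref{fiedler-ptak2}.
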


\begin{proof}1. and 2.:
Consider Lemma \ref{split} with $K=V^*(A)$, then $V^*(A)$ is the solution set to the system
of inequalities $a_{ij}x_j\leq x_i$, and we need to show that the inequalities
with $(i,j)\in E_c(A)$, and those with $(i,j)\notin E_c(A)$,
play the role of $S_1$, and $S_2$ of Lemma \ref{split}, respectively.
For this, we note that by Theorem~\ref{subeigs} part~6., the inequalities with $(i,j)\in E_c(A)$
are satisfied with equality for all $x\in V^*(A)$, and Prop. \ref{fiedler-ptak2} implies that
there is $x\in V^*(A)$ by which all the inequalities with $(i,j)\notin E_c(A)$ are
satisfied strictly.

3.: The ``if'' part is obvious. The ``only if'' part: from Theorem \ref{subeigs} it follows
that $V^*(A)=V(A)$ if and only if the set of
critical nodes is $[n]$. Suppose that $V(A)$ is properly contained in $V^*(A)$,
then there is a node $i$ which is not critical. Then for any eigenvector $y$
there is an edge $(i,j)$ for which $a_{ij}y_j=y_i$
and obviously $(i,j)\notin E_c(A)$. Hence $y\notin\ri(V^*(A))$.

4.: Follows from Theorem~\ref{fiedler-ptak1} and part~2.
\end{proof}\eproof \\

Note that as $V^*(A)$ is the {\em max algebraic} column span
of $A^*$, its relative interior may also contain vectors which are
not positive linear combinations or positive log-convex combinations
of the columns of $A^*$. However, the relative interior of
$V^*(A)$, or the set of vectors
which lead to strict visualization, is exactly the set of vectors that
can be represented as positive combinations of all {\em convex} extremals
in $\ext(V^*(A))$, see \cite{Gru:67} Sect. 2.3.

We also remark here that a bijection between $\ri(V^*(A))$ and $\ri(V^*(A^T))$ is given
by $x\mapsto x^{-1}$, since $\lambda(A)=\lambda(A^T)$
and if $x$ is positive, then $a_{ij}x_j=\lambda(A) x_i$
(resp. $a_{ij}x_j<\lambda(A)x_i$)
holds if and only if $a_{ij}x_i^{-1}=\lambda(A) x_j^{-1}$ (resp. $a_{ij}x_i^{-1}<\lambda(A)x_j^{-1}$).
In particular, positive
linear combinations of {\em rows} of Kleene stars also lead, after the inversion,
to strict visualization scalings.

If $A$ is strongly definite (that is, $\lambda(A)=1$ and $a_{ii}=1$ for all $i\in[n]$),
then by Prop.~\ref{eig=subeig} we have $V^*(A)=V(A)$, so that $V(A)$ is convex and the maximum cycle
geometric mean can be strictly visualized by eigenvectors in $\ri(V(A))$.
We note that in the case when, in addition, the weights
of all non-trivial cycles are strictly less than $1$, the strict visualization scalings
have been described in \cite{But-03}.

Strongly definite matrices are related to the
{\em assignment problem.} By this we understand the following task: Given $%
A\in\Rpnn$ find a permutation $\pi
\in P_{n}$ such that its {\em weight}
$a_{1,\pi (1)}\cdot a_{2,\pi (2)}\cdot\ldots\cdot a_{n,\pi (n)}$
is maximal. A permutation $\pi$ of maximal weight will be also called
a {\em maximal} permutation.

Again, our aim is to precisely identify (``visualize'') matrix entries belonging to an optimal
solution using matrix scaling. That is, for a matrix $A$ with nonzero permutations, find diagonal
matrices $X$ and $Y$ such that all entries of $XAY$ on maximal permutations are equal to $1$
and that all other entries are strictly less than $1$.

To do this, we first find a maximal permutation $\pi$ and define the corresponding permutation
matrix $D^{\pi}$ by
\begin{equation*}
D^{\pi}_{ij}=
\begin{cases}
a_{ij}, & \text{if $j=\pi(i)$,}\\
0, & \text{if $j\neq\pi(i)$.}
\end{cases}
\end{equation*}
Using this matrix, we scale $A$ to one of its strongly definite forms $(D^{\pi})^{-1} A$.
In a strongly definite matrix, any maximal permutation is decomposed into critical cycles.
Conversely, any critical cycle can be extended to a maximal permutation, using the
diagonal entries.
Therefore, scalings $X$ which visualize the maximal permutations
of $(D^{\pi})^{-1} A$ are scalings which visualize
the critical cycles, and these are
given by Theorem \ref{relint2}.
After we have done this diagonal similarity scaling, we need
permutation matrix $E^{\pi^{-1}}=(\kroneker_{i\pi^{-1}(i)})$ to bring
all permutations again to their right place. Thus we get scaling $E^{\pi^{-1}} X^{-1} (D^{\pi})^{-1} A X$
which visualizes all maximal permutations.

Numerically, solving visualization problems by the methods described
above, relies on the following three standard problems: finding the
maximal cycle mean, computing the Kleene star of a matrix, and
finding a maximal permutation.  The first problem can be solved by
Karp's method \cite{BCOQ,karp,HOW:05}, the second problem can be
solved by the Floyd-Warshall algorithm \cite{PS} and the third problem
can be solved by the Hungarian method \cite{PS}. All of these methods
are polynomial and require $O(n^3)$ operations, which also gives a
complexity bound for the visualization problems.

Finally we note that the problem of strict visualization is related
to the problem of max balancing considered in
\cite{RSS:92,SS:90,SS:91}. A matrix $B$ is max balanced if and only
if each non-zero element lies on a cycle on which it is a minimal
element. It follows that $B$ is strictly visualized. It was shown in
\cite{RSS:92,SS:90,SS:91} that for each irreducible nonnegative $A$
there is an essentially unique diagonal matrix $X$ such that the scaling $B
= X^{-1}AX$ is max balanced, and hence there is a unique max
balanced matrix $\MB(A)$ diagonally similar to $A$. Importantly, the
matrix $\MB(A)$ is canonical for diagonal similarity of irreducible
nonnegative matrices, that is $A$ is diagonally similar to $C$ if
and only if $\MB(A) = \MB(C)$. A complexity bound for max balancing
which follows from \cite{RSS:92,SS:90,SS:91}, is $O(n^4)$, see also
\cite{YTO:06} for a faster version of the max balancing algorithm.

\section{Diagonal similarity scalings which leave a matrix visualized}

Another approach to describing the visualization scalings is to
start with a visualized matrix and describe all scalings which leave it visualized.

We first describe the Kleene star of a definite visualized matrix $A\in\Rpnn$. Let
$C^*(A)$ have $m$ strongly connected components $C_{\mu}$, where
$\mu\in[m]$, and denote by $N_{\mu}$ the set of nodes in $C_{\mu}$.
Denote by $A_{\mu\nu}$ the {\em
$(\mu,\nu)$-submatrix} of $A$ extracted from the rows with indices
in $N_{\mu}$ and from the columns with indices in $N_{\nu}$. Let
$A^C\in\Rpmm$ be the $m\times m$ matrix with entries
$\alpha_{\mu\nu}=\max\{a_{ij}\mid i\in N_{\mu},\, j\in
N_{\nu}\}$, and let $E\in\Rpnn$ be the $n\times n$ matrix with all
entries equal to $1$.

\begin{proposition}
\label{vis-kls}
Let $A\in\Rpnn$ be a definite visualized (resp. strictly visualized) matrix, let $m$
be the number of strongly connected components of $C^*(A)$ and let $A^C=(\alpha_{\mu\nu})$,
$A^*_{\mu\nu}$ and $E_{\mu\nu}$ be as defined above.
Then
\begin{itemize}
\item[1.] $\alpha_{\mu\mu}=1$ for all $\mu\in[m]$ and $\alpha_{\mu\nu}\leq 1$
(resp. $\alpha_{\mu\nu}<1$ for $\mu\neq\nu$), where
$\mu,\nu\in[m]$);
\item[2.] any $(\mu,\nu)$-submatrix of $A^*$ is equal to
$A^*_{\mu\nu}=\alpha_{\mu\nu}^* E_{\mu\nu}$, where $\alpha_{\mu\nu}^*$ is
the $(\mu,\nu)$-entry of $(A^C)^*$, and $E_{\mu\nu}$ is the
$(\mu,\nu)$-submatrix of $E$.
\end{itemize}
\end{proposition}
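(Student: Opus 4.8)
The plan is to reduce both claims to the path interpretation of the Kleene star recalled after Proposition~\ref{kls-pos} (namely $a^*_{ij}=\max_\pi w(\pi,A)$ over $i\to j$ paths) together with the structure of $C^*(A)$. As preliminary observations I would record that, since the off-loop edges of $C^*(A)$ are exactly the transitive closure of $C(A)$, the digraph $C^*(A)$ is transitively closed; hence two nodes lie in the same component $C_\mu$ iff they are joined by an edge of $E_c^*(A)$. The components therefore group the critical nodes as the strongly connected components of $C(A)$ do, while each non-critical node forms a singleton. Finally, every critical edge lies on a critical cycle and so joins nodes of a single component.

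For part~1, visualization gives $a_{ij}\le\lambda(A)=1$ for every entry, so $\alpha_{\mu\nu}\le 1$ for all $\mu,\nu$. When $\mu\neq\nu$ the block indexed by $N_\mu\times N_\nu$ contains no critical edge (critical edges stay inside one component), so under strict visualization every entry of the block is $<1$ and thus $\alpha_{\mu\nu}<1$. For the diagonal, a component carrying a critical cycle contains an edge of weight $\lambda(A)=1$, which together with $a_{ij}\le 1$ yields $\alpha_{\mu\mu}=1$.

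Part~2 is the substantive step, and I would split it. First I would show $A^*$ is constant on each block $N_\mu\times N_\nu$. If $i,i'\in N_\mu$ then $(i,i')\in E_c^*(A)$, so by Theorem~\ref{subeigs} part~2 the rows $A^*_{i\cdot}$ and $A^*_{i'\cdot}$ are proportional; evaluating the proportion at columns $i,i'$ and using that $a^*_{ii'}=1$ (a weight-$1$ critical path $i\to i'$ exists, and no $i\to i'$ path can exceed weight $1$ since $\lambda(A)=1$) forces the rows to coincide. Applying the same argument to columns gives $A^*_{\cdot j}=A^*_{\cdot j'}$ for $j,j'\in N_\nu$, so $a^*_{ij}$ depends only on $(\mu,\nu)$; calling this value $\beta_{\mu\nu}$ we get $A^*_{\mu\nu}=\beta_{\mu\nu}E_{\mu\nu}$.

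It remains to identify $\beta$ with $(A^C)^*$. I would first check $(A^C)^*$ exists by lifting any cycle of $A^C$ to a closed walk of $D_A$ of the same weight, joining the chosen optimal inter-component edges by weight-$1$ intra-component paths; then $\lambda(A)=1$ forces the cycle weight $\le 1$, so $\lambda(A^C)\le 1$. Since $\beta$ inherits $\beta\otimes\beta=\beta$ and unit diagonal from $A^*$, Proposition~\ref{char-kleene} makes $\beta$ a Kleene star with $\beta^*=\beta$; together with $A^C\le\beta$ (because $A\le A^*$) and monotonicity of the star this gives $(A^C)^*\le\beta$. Conversely, a maximal $i\to j$ path realizing $\beta_{\mu\nu}=a^*_{ij}$ visits a sequence of components whose inter-component edges are bounded by the relevant $\alpha$'s and whose intra-component stretches have weight $\le 1$, so $\beta_{\mu\nu}\le(A^C)^*_{\mu\nu}$. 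The main obstacle is precisely this two-way path-aggregation: making rigorous that maximal weights pass losslessly between $D_A$ and the condensed digraph of $A^C$, which is exactly where the block-constancy of $A^*$ and the weight-$1$ intra-component paths do the work.
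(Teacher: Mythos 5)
Your argument is correct in substance, but for part~2 it takes a genuinely different route from the paper's. The paper proves block-constancy and the identification with $(A^C)^*$ in a single stroke: it takes an arbitrary $i\to j$ path, splits it into intra-component stretches and inter-component edges, and improves it to a canonical path $\pi'$ whose intra-component stretches have weight $1$ and whose inter-component edges realize the maxima $\alpha_{\mu\nu}$; these canonical paths correspond bijectively to paths in the digraph of $A^C$, which yields $a^*_{ij}=\alpha^*_{\mu\nu}$ directly. You instead first extract block-constancy of $A^*$ from Theorem~\ref{subeigs} part~2 (proportionality of rows and columns for indices joined in $E_c^*(A)$, upgraded to equality via $a^*_{ii'}=1$), and then identify the resulting $m\times m$ matrix $\beta$ with $(A^C)^*$ by an algebraic sandwich: $\beta$ inherits idempotency and unit diagonal from $A^*$, hence is a Kleene star dominating $A^C$ by Prop.~\ref{char-kleene}, giving $(A^C)^*\leq\beta^*=\beta$, while the reverse inequality is again a path decomposition. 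Both proofs ultimately rest on the same combinatorial fact --- in a visualized matrix there is a weight-$1$ path between any two nodes of one critical component --- but your version cleanly separates the algebraic content (idempotency, monotonicity of the star) from the path surgery, at the cost of invoking more machinery; the paper's argument is shorter and more self-contained.

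Two small cautions. First, your justification that ``no $i\to i'$ path can exceed weight $1$ since $\lambda(A)=1$'' is not right as stated: for a merely definite matrix the off-diagonal entries of $A^*$ may well exceed $1$. What you actually need is visualization (every entry of $A$ is at most $1$), which you have. Second, your proof of $\alpha_{\mu\mu}=1$ covers only those components of $C^*(A)$ carrying a critical cycle; for a singleton component $\{i\}$ with $i$ non-critical one has $\alpha_{\mu\mu}=a_{ii}$, which need not equal $1$ (take $A=\diag(1,1/2)$, which is definite and strictly visualized). This is a defect of the statement rather than of your argument --- the paper's ``immediate from the definitions'' passes over the same case --- and it does not affect part~2 or the later applications, since $(A^C)^*$ always has unit diagonal.
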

\begin{proof} 1.: Immediate from the definitions.

2.: Take any $i\in N_{\mu},$ $j\in N_{\nu}$, and any path
$\pi=(i_1,\ldots,i_k)$ with $i_1:=i$ and $i_k:=j$. Then $\pi$ can be
decomposed as $\pi=\tau_1\circ \sigma_1\circ\tau_2\circ\ldots
\circ\sigma_{l-1}\circ\tau_l$, where $\tau_i$, for $i\in[l]$, are
(possibly trivial) paths which entirely belong to some critical
component $C_{\mu_i}$, with $\mu_1:=\mu$ and
$\mu_l=\nu$, and $\sigma_i$, for $i\in[l-1]$, are edges between
the strongly connected components. Then $w(\pi,A)\leq w(\pi',A)$,
where
$\pi'=\tau'_1\circ\sigma'_1\circ\tau'_2\circ\ldots\circ\sigma'_{l-1}\circ\tau'_l$
is also a path from $i$ to $j$ such that $\tau'_i$ entirely belong
to the same critical components as $\tau_i$, and $\sigma'_i$ are
edges connecting the same critical components as $\sigma_i$, but
$w(\sigma'_i,A)=\max\{a_{ij}\mid i\in N_{\mu_i},\, j\in
N_{\mu_{i+1}}\}$ and $w(\tau'_i,A)=1$. Such a path exists, since
in a visualized matrix, there exists a path of weight $1$ between
any nodes in the same component of the critical digraph. Thus
$a^*_{ij}$ is the greatest weight over all such paths $\pi'$. As
$\pi'$ bijectively correspond to the paths in the weighted digraph
associated with $A^C$, the claim follows.
\end{proof}

Note that, after a convenient simultaneous permutation of rows and columns,
we have that if $A$ is a definite visualized matrix, then

\begin{equation}
\label{e:vis-kls}
A^*=
\begin{pmatrix}
E_{11} & \alpha^*_{12} E_{12} &\ldots & \alpha^*_{1n} E_{1m}\\
\alpha^*_{21} E_{21} & E_{22} &\ldots & \alpha^*_{2n} E_{2m}\\
\vdots &\vdots &\ddots &\vdots\\
\alpha^*_{m1} E_{m1} & \alpha^*_{m2} E_{m2} &\ldots & E_{mm}
\end{pmatrix}.
\end{equation}

Note that $A^C$ does not contain critical cycles except for the loops,
otherwise $C_{\mu}$ are not the components of $C^*(A)$. Hence
$\Lin(A^C)=\R^m$, and we can speak of the interior of $V^*(A^C)$.

Given a strictly visualized matrix $A$ as above,
denote by $I_{\mu},\ \mu\in[m],$ the matrix such that $(I_{\mu})_{ij}=1$ whenever
$i=j$ belongs to $N_{\mu}$ and $(I_{\mu})_{ij}=0$ elsewhere, and by $A\dotplus B$
the direct sum of matrices $A$ and $B$.


\begin{theorem}
\label{theo:hs} Let $A\in\Rpnn$ be a definite visualized matrix and
let $m$ be the number of strongly connected components of $C^*(A)$.
Let $A^C$ and $I_{\mu}$ be as defined above. Then $X^{-1}AX$, where
$X=\diag(x)$ with $x\in\Rpn$ positive, is visualized (resp. strictly
visualized) if and only if $X$ has the form
\begin{equation*}
\label{eq:hs1}
X = \tilde{x}_1 I_1 \dotplus \cdots \dotplus \tilde{x}_m I_m,
\end{equation*}
where $\tilde{x}$ is a vector satisfying
$\alpha_{\mu\nu} \tilde{x}_{\nu}\leq \tilde{x}_{\mu}$ (resp.
$\alpha_{\mu\nu} \tilde{x}_{\nu}<\tilde{x}_{\mu}$), where $\mu\neq\nu$,  
$\mu,\nu\in[m]$.
In other words, $\tilde{x}\in V^*(A^C)$ (resp. $\tilde{x}\in\intererer(V^*(A^C))$).
\end{theorem}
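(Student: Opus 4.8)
The plan is to work directly with the entries of $B:=X^{-1}AX$, using that $(X^{-1}AX)_{ij}=x_i^{-1}a_{ij}x_j$ and that, by Proposition \ref{diag-sim-eq}, $\lambda(B)=\lambda(A)=1$ and $E_c(B)=E_c(A)$. Thus $B$ is visualized (resp.\ strictly visualized) exactly when $x_i^{-1}a_{ij}x_j=1$ for every $(i,j)\in E_c(A)$ and $x_i^{-1}a_{ij}x_j\le 1$ (resp.\ $<1$) for every $(i,j)\notin E_c(A)$. The whole proof is a case analysis of these conditions against the partition of $[n]$ into the node sets $N_\mu$ of the strongly connected components of $C^*(A)$, noting that an edge joining two \emph{distinct} components is automatically non-critical, since a critical cycle stays inside one component.

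For necessity I would first use the critical edges. Since $A$ is visualized, $a_{ij}=1$ on $E_c(A)$, so the forced equality $x_i^{-1}a_{ij}x_j=1$ gives $x_i=x_j$ there. As any two nodes of a critical component are joined by a directed path of critical edges, $x$ is constant on each $N_\mu$; combined with the fact that each non-critical node is its own singleton component of $C^*(A)$, this delivers the block form $X=\tilde x_1 I_1\dotplus\cdots\dotplus\tilde x_m I_m$. To extract the inequalities, for $\mu\neq\nu$ I would pick the edge $(i_0,j_0)$ realising $\alpha_{\mu\nu}=a_{i_0 j_0}$; it is non-critical (it runs between components), so $\alpha_{\mu\nu}\tilde x_\nu = a_{i_0 j_0}x_{j_0}\le x_{i_0}=\tilde x_\mu$ (resp.\ $<$).

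For sufficiency I would substitute the block form and verify each entry $x_i^{-1}a_{ij}x_j$ in three cases: critical edges (then $a_{ij}=1$ and $x_i=x_j$, giving value $1$); non-critical edges between components (then $a_{ij}\le\alpha_{\mu\nu}$ and the hypothesis $\alpha_{\mu\nu}\tilde x_\nu\le\tilde x_\mu$, resp.\ $<$, give value $\le1$, resp.\ $<1$); and non-critical edges inside a single component. The last case, in the strict version, is the main obstacle, because there $x_i=x_j$ forces the scaled value to equal $a_{ij}$, which the hypothesis on $A^C$ does not control. I would settle it with the auxiliary fact that in a definite visualized matrix a within-component edge of weight $1$ must be critical: close $(i,j)$ into a cycle using a critical $j\to i$ path inside the component, whose weight is $1$ since visualized critical edges have weight $1$, so the cycle is critical and $(i,j)\in E_c(A)$. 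Hence a non-critical within-component edge satisfies $a_{ij}<1$, yielding strict visualization.

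Finally I would translate the assembled inequalities into the claimed cone membership. Because $\alpha_{\mu\mu}=1$, the constraints with $\mu=\nu$ are vacuous, so $\alpha_{\mu\nu}\tilde x_\nu\le\tilde x_\mu$ for all $\mu\neq\nu$ is exactly $A^C\otimes\tilde x\le\tilde x$, i.e.\ $\tilde x\in V^*(A^C)$. For the strict version, using that $A^C$ has no non-loop critical cycles (its critical graph consists only of loops, as already noted), so $\Lin(A^C)=\R^m$ and the off-diagonal inequalities form the set $S_2$ of Lemma \ref{split}, the relative interior of $V^*(A^C)$ is precisely the set where all these are strict; hence the strict inequalities characterise $\tilde x\in\intererer(V^*(A^C))$.
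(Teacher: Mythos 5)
Your proof is correct and follows essentially the same route as the paper's: entrywise verification of $x_i^{-1}a_{ij}x_j$, constancy of $x$ on each component of $C^*(A)$ forced by the weight-$1$ critical edges, reduction of the between-component constraints to $A^C$ by taking maxima over the realising edges, and Lemma \ref{split} applied to $A^C$ for the cone membership. The one place you go beyond the paper is the auxiliary fact that a non-critical within-component edge of a definite visualized matrix has weight strictly below $1$ (so the diagonal blocks are automatically strictly visualized); the paper asserts this only implicitly in the sentence ``each block $A_{\lambda\lambda}$ remains unchanged, and hence visualized (resp.\ strictly visualized)'', and your explicit closing of $(i,j)$ into a critical cycle via a critical $j\to i$ path tidies up that small gap.
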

\begin{proof}
The ``if'' part: Let $x$ be as described, then the 
elements $a_{ij}$, for $i,j\in N_{\mu}$,
do not change after the scaling, so each block $A_{\lambda\lambda}$ remains 
unchanged, and hence
visualized (resp. strictly visualized). For $a_{ij}$ with $i\in N_{\mu}$, $j\in N_{\nu}$,
$\mu\ne\nu$, we have that $a_{ij}x_j\leq x_i$ (resp. $a_{ij}x_j<x_i$),
as $x_i=\tilde{x}_{\mu}$, $x_j=\tilde{x}_{\nu}$, and
$\alpha_{ij}$ is the maximum over these $a_{ij}$. Hence
$X^{-1}AX$
is visualized (resp. strictly visualized).

The ``only if'' part:
Suppose that scaling by $X$ leaves
$A$ visualized (resp. makes $A$ strictly visualized).
As $A$ is initially visualized, all
critical edges have weights equal to $1$, and
$x$ should be such that $x_i=x_j=\tilde{x}_{\mu}$ whenever
$i,j$ belong to the same $N_{\mu}$.
For $i\in N_{\mu}$, $j\in N_{\nu}$, $\mu\neq\nu$,
we should have that $a_{ij} \tilde{x}_{\nu}\leq \tilde{x}_{\mu}$ (resp. $a_{ij} \tilde{x}_{\nu}<
\tilde{x}_{\mu}$).
Taking maximum over these $a_{ij}$, we obtain that
this is equivalent to
$\alpha_{\mu\nu} \tilde{x}_{\nu}\leq \tilde{x}_{\mu}$
(resp. $\alpha_{\mu\nu} \tilde{x}_{\nu}<
\tilde{x}_{\mu}$).

It remains to apply Lemma \ref{split} (with $S_1=\emptyset$), to obtain that the same
is equivalent to $\tilde{x}\in V^*(A^C)$ (resp. $\tilde{x}\in\intererer(V^*(A^C))$).
\end{proof}

In the following we discuss some issues concerning linear algebraic properties of Kleene cones
and Kleene stars. In this context,
Kleene stars are known as path product matrices, see
\cite{JS:99,JS-07,JS-08}.

For a matrix $A\in\Rpnn$ with $\lambda(A)>0$, we proved that
\begin{equation}
\label{eq:pca1}
\Lin(C(A))=\{x\in\Rn\mid a_{ij} x_j=\lambda(A)x_i,\ (i,j)\in E_c(A)\}.
\end{equation}
is the linear hull of $V^*(A)$. Note that in the case when $A$ is
definite and strictly visualized, $a_{ij}=1$ for all $(i,j)\in E_c(A)$
and $\lambda(A)=1$. Also see Section 2 for the definition of
$n(C(A))$ and $|\overline{N_c(A)}|$.

\begin{proposition}
\label{lina-eq}
Let $A\in\Rpnn$ have $\lambda(A)>0$.
\begin{itemize}
\item[1.] The dimension of $\Lin(C(A))$ is equal to the number of strongly connected
components in $C^*(A)$, that is, to $n(C(A))+|\overline{N_c(A)}|$;
\item[2.] If $A$ is definite, then $C^*(A)=C(A^*)$ and $\Lin(C(A))=\Lin(C(A^*))$.
\end{itemize}
\end{proposition}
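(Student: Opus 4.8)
The plan is to prove the two parts by fairly independent arguments: part 1 is essentially a dimension computation for the linear system defining $\Lin(C(A))$, matched against a count of strongly connected components, while part 2 rests on identifying the critical edges of $A^*$ and then invoking $V^*(A)=V^*(A^*)$. For part 1 I would first normalize. Replacing $A$ by $A/\lambda(A)$ changes neither the solution set $\Lin(C(A))$ nor the digraphs $C(A)$, $C^*(A)$, so I may take $A$ definite; then a diagonal similarity $B=X^{-1}AX$ (available by Prop.~\ref{fiedler-ptak2}) carries $\Lin(C(A))$ isomorphically onto $\Lin(C(B))$ and leaves $C(A)=C(B)$ fixed (Prop.~\ref{diag-sim-eq}), so I may assume $A$ is visualized. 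In that case each critical edge has weight $1$ and the defining relations of $\Lin(C(A))$ become $x_i=x_j$ for $(i,j)\in E_c(A)$. Hence $\Lin(C(A))$ is the space of vectors that are constant on each connected component of the graph $([n],E_c(A))$, whose dimension equals the number of such components.

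The linchpin is the observation that in the critical digraph $C(A)$ every weakly connected component is strongly connected: each critical edge lies on a critical cycle, so reachability in $C(A)$ is symmetric. Consequently the components of $([n],E_c(A))$ are exactly the strongly connected components of $C(A)$, of which there are $n(C(A))$, together with the $|\overline{N_c(A)}|$ isolated non-critical nodes; this gives $\dim\Lin(C(A))=n(C(A))+|\overline{N_c(A)}|$. The same bookkeeping identifies the strongly connected components of $C^*(A)$: since adjoining the transitive closure and the loops does not change reachability, the critical nodes split into the strongly connected components of $C(A)$ while each non-critical node forms its own component, for a total of $n(C(A))+|\overline{N_c(A)}|$. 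Matching the two counts proves part 1.

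For part 2 assume $A$ definite, so that $A^*$ is strongly definite (Theorem~\ref{subeigs}) and every loop is critical in $A^*$, matching the loops of $C^*(A)$. For $i\neq j$ I would prove the equivalences $(i,j)\in E_c(A^*)\iff a^*_{ij}a^*_{ji}=1\iff(i,j)\in E_c^*(A)$. Applying Theorem~\ref{subeigs} part~1 to $A^*$ (whose Kleene star is itself) gives $(i,j)\in E_c(A^*)$ iff $a^*_{ij}a^*_{jk}=a^*_{ik}$ for all $k$; the case $k=i$ forces $a^*_{ij}a^*_{ji}=1$, and conversely this identity together with $A^*=A^*\otimes A^*$ recovers all the equalities. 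The second equivalence is Theorem~\ref{subeigs} part~2, because $a^*_{ij}a^*_{ji}=1$ is exactly the proportionality $A^*_{\cdot i}\sim A^*_{\cdot j}$ (read off from the unit diagonal of $A^*$). Thus $E_c(A^*)=E_c^*(A)$ and $C^*(A)=C(A^*)$. Finally $\Lin(C(A))=\Lin(C(A^*))$ is immediate: by Theorem~\ref{relint2} part~1 each side is the linear hull of its subeigencone, and $V^*(A)=V^*(A^*)$ by Prop.~\ref{subeigstar}.

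The main obstacle I anticipate is the off-diagonal analysis in part 2, namely pinning the critical edges of the Kleene star to the single scalar condition $a^*_{ij}a^*_{ji}=1$ and reading this condition both as criticality in $A^*$ and as critical connectivity in $A$. Underlying everything, and reused in the component counts of part 1, is the structural fact that the weak and strong components of $C(A)$ coincide; once that is in place, the remaining steps are elementary graph bookkeeping and direct appeals to Theorem~\ref{subeigs}, Theorem~\ref{relint2} and Prop.~\ref{subeigstar}.
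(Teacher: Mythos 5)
Your proof is correct, but it follows a genuinely different route from the paper's. The paper proves both parts at once by reducing (via diagonal similarity) to a definite, strictly visualized $A$ and then invoking the explicit block form of $A^*$ established in Proposition \ref{vis-kls}: from the display \eqref{e:vis-kls}, with $\alpha^*_{\mu\nu}<1$ off the diagonal blocks, one reads off simultaneously that $C^*(A)=C(A^*)$ and that $\Lin(C(A^*))=\Lin(C(A))$ is the space of vectors constant on each component $N_{\mu}$, with the indicator vectors $e^{\mu}$ as an explicit basis. You bypass Proposition \ref{vis-kls} entirely. For part 1 you work directly with $C(A)$ after visualization, using the observation that weak and strong connectivity coincide in $C(A)$ (every critical edge lies on a critical cycle, so reachability is symmetric) to count components on both sides; this is cleaner in that it never mentions the Kleene star. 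For part 2 you characterize the critical edges of $A^*$ by the scalar identity $a^*_{ij}a^*_{ji}=1$ via Theorem \ref{subeigs} parts 1 and 2 applied to $A^*$ --- the converse direction you leave implicit does go through, since $a^*_{ik}\le a^*_{ij}a^*_{ji}a^*_{ik}\le a^*_{ij}a^*_{jk}\le a^*_{ik}$ forces equality --- and you then get $\Lin(C(A))=\Lin(C(A^*))$ for free from Theorem \ref{relint2} part 1 together with $V^*(A)=V^*(A^*)$, which is arguably more conceptual than the paper's block-matrix computation and, unlike it, needs no visualization hypothesis for part 2. What the paper's approach buys in exchange is an explicit basis of $\Lin(C(A))$ and a concrete picture of $A^*$ that is reused elsewhere in Section 4.
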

\begin{proof}
Let $N_{\mu}$, for $\mu\in [m]$ where $m=n(C(A))+|\overline{N_c(A)}|$,
be the set of nodes of $C_{\mu}$, a strongly connected component of $C^*(A)$.
In the case when $A$ is
definite and strictly visualized, $C^*(A)=C(A^*)$ is seen from \eqref{e:vis-kls}, where
$\alpha^*_{\mu\nu}<1$ for all $\mu\neq\nu$, and it is also seen from \eqref{e:vis-kls}
that $\Lin(C(A^*))$ is the linear space
comprising all vectors $x\in\Rpn$ such that $x_i=x_j$ whenever $i$ and $j$ belong to the same
$N_{\mu}$. As $\Lin(C(A))$ is also equal to that space by
\eqref{eq:pca1}, we have that $\Lin(C(A))=\Lin(C(A^*))$. We can take, as a basis
of this space, the vectors $e^{\mu}$, for $\mu\in[m]$, such that
$e^{\mu}_j=1$ if $j\in N_{\mu}$ and $e^{\mu}_j=0$ if $j\notin N_{\mu}$,
and hence the dimension of $\Lin(C(A))$ is $n(C(A))+|\overline{N_c(A)}|$.
The general case can be obtained using diagonal similarity.
\end{proof}\eproof

Prop. \ref{lina-eq} enables us to present the following result.

\begin{theorem}
\label{sd-dim} For any matrix $A$ with $\lambda(A)>0$, the max
algebraic dimension of $V^*(A)$ is equal to the (linear algebraic)
dimension of $\Lin(C(A))$, which is the linear hull of $V^*(A)$.
\end{theorem}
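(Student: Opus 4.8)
The plan is to read this theorem off directly from the dimension formulas already established, since both quantities appearing in the statement have been computed separately and turn out to be the same integer. The whole argument is therefore a bookkeeping comparison rather than a fresh computation.

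First I would recall Proposition \ref{maxdim}, which states that for any $A$ with $\lambda(A)>0$ the max algebraic dimension of $V^*(A)$ equals $n(C(A))+|\overline{N_c(A)}|$. Next I would invoke part 1 of Proposition \ref{lina-eq}, according to which the ordinary linear dimension of $\Lin(C(A))$ equals the number of strongly connected components of $C^*(A)$, and this number is again $n(C(A))+|\overline{N_c(A)}|$. Comparing the two expressions immediately yields that the max algebraic dimension of $V^*(A)$ coincides with the linear algebraic dimension of $\Lin(C(A))$.

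To complete the statement, I would cite part 1 of Theorem \ref{relint2}, which identifies $\Lin(C(A))$ with the linear hull of $V^*(A)$; this justifies the final clause. There is no genuine obstacle at this stage: the theorem is precisely the assertion that two independently computed dimensions agree, and all of the substantive work has already been carried out. The conceptual content is hidden in the earlier results — in Proposition \ref{lina-eq}, where the dimension of $\Lin(C(A))$ is pinned down to the component count of $C^*(A)$ via the indicator basis $\{e^{\mu}\}$, and in Proposition \ref{maxdim} together with Theorem \ref{subeigs}, where the same count emerges from the scaled extremal basis of the subeigencone. The one point worth stating explicitly in the write-up is that the reduction to the definite case (via the scaling $A\mapsto A/\lambda(A)$ discussed in Section 2) preserves both dimensions, so the hypothesis $\lambda(A)>0$ rather than definiteness is already accommodated by the cited propositions.
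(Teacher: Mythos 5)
Your proposal is correct and follows essentially the same route as the paper, whose proof consists precisely of combining Proposition \ref{maxdim} with Proposition \ref{lina-eq} part 1 to observe that both dimensions equal $n(C(A))+|\overline{N_c(A)}|$. Your additional explicit citation of Theorem \ref{relint2} part 1 for the linear-hull clause is a reasonable piece of bookkeeping that the paper leaves implicit.
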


\begin{proof}
It follows from Prop.~\ref{maxdim} and Prop.~\ref{lina-eq} part 1. that
both dimensions are equal to the number of strongly connected components in $C(A)$.
\end{proof}\eproof

When $A$ is strongly definite and the weights of all nontrivial
cycles are strictly less than $1$, Theorem \ref{sd-dim} implies that
$V^*(A)$ contains $n$ linearly independent vectors. This result has
been obtained by Butkovi\v{c} \cite{But}, Theorem 4.1. One could
also conjecture that in this case the columns of $A^*$ should be
linearly independent in the usual sense. However, this is not so in
general as we show by modifying Example 3.11 in Johnson-Smith
\cite{JS:99}. Let
\begin{equation*}
A = A^*=
\begin{pmatrix}
1 & 5/11 & 5/11 & 7/11 & 7/11 & 7/11 \\
5/11 & 1 & 5/11 & 7/11 & 7/11 & 7/11 \\
5/11 & 5/11 & 1 & 7/11 & 7/11 & 7/11 \\
7/11 & 7/11 & 7/11 & 1 & 5/11 & 5/11 \\
7/11 & 7/11 & 7/11 & 5/11 & 1 & 5/11 \\
7/11 & 7/11 & 7/11 & 5/11 & 5/11 & 1
\end{pmatrix}.
\end{equation*}
Then the linear algebraic rank of $A^*$ is $5$, however, by Theorem \ref{sd-dim} (or \cite{But},
Theorem 4.1) the max algebraic dimension of $V^*(A)$, and therefore
the linear algebraic dimension of $L(C(A))$, are 6. We observe that $x
= [7/11,7/11,7/11,1,1,1]^T$ is a max eigenvector of $A^*$ (hence in
$V^*(A)$) but it is not in the linear algebraic span of the columns of $A^*$. Finally we note that the
original form of Example 3.11 in \cite{JS:99} provides a Kleene star
with negative determinant.

\end{document}